\crefname{hypothesis}{Hypothesis}{Hypotheses}
\def\<{\langle}
\def\>{\rangle}
\def\limn{\lim_{n\to\infty}}
\def\limsupn{\limsup_{n\to\infty}}
\def\liminfn{\liminf_{n\to\infty}}
\def\en{\enskip}
\title{Finite element analysis for identifying the  reaction coefficient in PDE from boundary observations
%\thanks{Submitted to the editors DATE.
%\funding{This work was funded by the University of Goettingen, Lower Saxony, Germany.}}
}
\author
{
Tran Nhan Tam Quyen\thanks{Institute for Numerical and Applied Mathematics, University of Goettingen,
Lotzestr. 16-18, 37083 Goettingen, Germany
  (\email{quyen.tran@uni-goettingen.de}).} 
%\and
}
\begin{document}

\maketitle

% REQUIRED
\begin{abstract}
  This work is devoted to the nonlinear inverse problem of identifying the reaction coefficient in an elliptic boundary value problem from single Cauchy data on a part of the boundary. We then examine simultaneously two elliptic boundary value problems generated from the available Cauchy data. The output least squares method with the Tikhonov regularization is applied to find approximations of the sought coefficient. We discretize the PDEs with piecewise linear finite elements. The stability and convergence of this technique are then established. A numerical experiment is presented to illustrate our theoretical findings
\end{abstract}

% REQUIRED
\begin{keywords}
Reaction coefficient, finite element method, Tikhonov regularization, Neumann problem, mixed problem, ill-posed problem
\end{keywords}

% REQUIRED
\begin{AMS}
  35R25, 47A52, 35R30, 65J20, 65J22
\end{AMS}

\section{Introduction}
Let $\Omega$ be an open, bounded and connected domain of $\mathbb{R}^d,
~d\ge2$ with the boundary 
%polyhedral 
$\partial \Omega$ and $\Gamma\subset\partial\Omega$ be an
accessible part of the boundary which is relatively open. In this paper we are related with the following elliptic system
\begin{align}
-\nabla \cdot \big(\boldsymbol{\alpha} \nabla \Phi \big) + \beta \Phi &= f \quad\enskip \mbox{in} \quad \Omega,  \label{17-5-16ct1}\\
\boldsymbol{\alpha} \nabla \Phi \cdot \vec{\boldsymbol{n}} +\sigma \Phi &= j^\dag \quad \mbox{~on} \quad \Gamma,  \label{17-5-16ct1*}\\
\boldsymbol{\alpha} \nabla \Phi \cdot \vec{\boldsymbol{n}} +\sigma \Phi &= j_0 \quad \mbox{~on} \quad \partial\Omega\setminus\Gamma, \label{17-5-16ct1**}\\
 \Phi &= g^\dag \quad \mbox{~on} \quad \Gamma, \label{17-5-16ct1***}
\end{align}
where $\vec{\boldsymbol{n}}$ is the unit outward normal on $\partial\Omega$, the boundary conditions $j^\dag\in H^{-1/2}(\Gamma) := {H^{1/2}(\Gamma})^*$, $j_0\in H^{-1/2}(\partial\Omega\setminus\Gamma)$,~ $g^\dag\in H^{1/2}(\Gamma)$, the source term $f\in H^{-1}(\Omega) := {H^1(\Omega)}^*$ and the functions $\boldsymbol{\alpha},~ \sigma$ are assumed to be known. Here, $\sigma \in L^\infty(\partial\Omega)$ with $\sigma(x)\ge 0$ a.e. on $\partial\Omega$ and $\boldsymbol{\alpha} :=  \left(\alpha_{rs}\right)_{1\le r, s\le d} \in {L^{\infty}(\Omega)}^{d \times d}$ is a symmetric diffusion matrix satisfying the uniformly elliptic condition
$$
\boldsymbol{\alpha}(x)\xi \cdot\xi = \sum_{1\le r,s\le d} \alpha_{rs}(x)\xi_r\xi_s \ge \underline{\alpha} |\xi|^2
$$
a.e. in $\Omega$ for all $\xi = \left(\xi_r\right)_{1\le r\le d} \in \mathbb{R}^d$ with some constant $\underline{\alpha} >0$. In case $\boldsymbol{\alpha} = \alpha\cdot \boldsymbol{I}_d$, the unit $d\times d$-matrix $\boldsymbol{I}_d$ and $\alpha: \Omega \to \mathbb{R}$, then $\alpha$ is called the scalar diffusion.

The system \eqref{17-5-16ct1}--\eqref{17-5-16ct1***} is overdetermined, i.e. on the part $\Gamma$ of the boundary $\partial\Omega$  the Neumann data and Dirichlet data are supplemented simultaneously. Therefore, if the reaction coefficient
\begin{align}\label{2-11-18ct1}
\beta \in \mathcal{S}_{ad} := \{\beta \in L^\infty(\Omega) ~|~ 0< \underline{\beta} \le \beta(x) \le \overline{\beta} \enskip \mbox{a.e. in} \enskip \Omega \}
\end{align}
is given also, there may be no $\Phi$ satisfying the system, where the constants $0< \underline{\beta} \le \overline{\beta}$ are known. In this paper we assume that the system is consistent and our aim is to reconstruct the coefficient $\beta\in \mathcal{S}_{ad}$ from several sets of observation data $\big(j_{\delta}^i,g_{\delta}^i\big)_{i=1,\ldots I} \subset H^{-1/2}(\Gamma) \times  H^{1/2}(\Gamma)$ of the exact $\big(j^\dag,g^\dag\big)$ obeying the deterministic noise model
\begin{align}\label{31-5-17ct1}
\frac{1}{I}\sum_{i=1}^I\left( \big\|j_{\delta}^i - j^\dag\big\|_{H^{-1/2}(\Gamma)} + \big\|g_{\delta}^i - g^\dag\big\|_{H^{1/2}(\Gamma)}\right) \le \delta
\end{align}
with $\delta >0$ denoting the error level of the observations.

%In this paper we investigate the stability and convergence of finite element methods for the problem of identifying the reaction coefficient in elliptic PDEs from observations on an accessible part of the boundary. This problem has not been studied so far, the existing results used distributed measurements which is very hard to apply in practice.

The problem arises from different contexts of applied sciences, e.g., from aquifer analysis, optical tomography which attracted great attention of many scientists in the
last 40 years or so. For surveys on the subject, we refer the reader to \cite{BaKu89,Sun94,Tar05}. Although there have been many papers devoted to the subject, the authors however used the {\it distributed} observations, i.e. the measurement data is assumed to be given in the whole domain $\Omega$, see, e.g., 
Alt
\cite{Alt91},
Colonius and Kunisch
\cite{CoKu86},
Engl {\it et al.}
\cite{EKN89},
Kaltenbacher and Hofmann 
\cite{KaHo10},  
Kaltenbacher and Klassen 
\cite{KaKl18}, 
Neubauer
\cite{Ne92},
Resmerita and Scherzer
\cite{ReSc06}
and 
\cite{haqu10,haqu11,haqu12,haqu12jmaa} and the references therein.
We mention that the {\it boundary} observation subject, i.e. the measurement data is available only on the part $\Gamma$ of the boundary $\partial\Omega$ as defined in the present paper for this identification problem, which is more realistic
from the practical point of view, has not yet been investigated so far.

For simplicity of exposition we below consider one observation pair $\left(j_\delta,g_\delta\right)$ being available, i.e. $I=1$, while the approach described here can be naturally extended to multiple measurements. We from the available observation data $\left(j_\delta,g_\delta\right)$ simultaneously examine the Neumann boundary value problem
\begin{align}
-\nabla \cdot \big(\boldsymbol{\alpha} \nabla u \big) + \beta u &= f \quad \mbox{~in} \quad \Omega,  \label{17-5-16ct2}\\
\boldsymbol{\alpha} \nabla u \cdot \vec{\boldsymbol{n}} +\sigma u &= j_\delta \quad \mbox{on} \quad \Gamma,  \label{17-5-16ct2*}\\
\boldsymbol{\alpha} \nabla u \cdot \vec{\boldsymbol{n}} +\sigma u &= j_0 \quad \mbox{on} \quad \partial\Omega\setminus\Gamma  \label{17-5-16ct2**}
\end{align}
and the mixed boundary value problem
\begin{align}
-\nabla \cdot \big(\boldsymbol{\alpha} \nabla v \big) + \beta v &= f \quad \mbox{~in} \quad \Omega,  \label{17-5-16ct3}\\
v &= g_\delta \quad \mbox{on} \quad \Gamma,  \label{17-5-16ct3*}\\
\boldsymbol{\alpha} \nabla v \cdot \vec{\boldsymbol{n}} +\sigma v &= j_0 \quad \mbox{on} \quad \partial\Omega\setminus\Gamma.  \label{17-5-16ct3**}
\end{align}
Let $N_{j_\delta}(\beta)$ and $M_{g_\delta}(\beta)$ be the unique weak solutions of \cref{17-5-16ct2}--\cref{17-5-16ct2**} and \cref{17-5-16ct3}--\cref{17-5-16ct3**}, respectively. We then consider a minimizer $\beta_{\delta,\rho}$ of the Tikhonov regularized minimization problem
$$
\min_{\beta \in \mathcal{S}_{ad}} J_{\delta,\rho}(\beta), \quad J_{\delta,\rho} (\beta) := \big\|N_{j_\delta}(\beta)- M_{g_\delta}(\beta)\big\|^2_{L^2(\Omega)} + \rho\|\beta-\beta^*\|^2_{L^2(\Omega)} \eqno \left(\mathcal{P}_{\delta,\rho}\right) 
$$
as reconstruction, where $\rho>0$ is the regularization parameter and $\beta^*$ is an a priori estimate of the true coefficient. The motivation for using the above cost functional is that $\big\|N_{j_\delta}(\beta)- M_{g_\delta}(\beta)\big\|_{L^2(\Omega)} \ge 0$ and at the sought coefficient $\beta$ it holds the identity $\big\|N_{j^\dag}(\beta)- M_{g^\dag}(\beta)\big\|_{L^2(\Omega)} = 0$.  

Let $N^h_{j_\delta}(\beta)$ and $M^h_{g_\delta}(\beta)$ be corresponding approximations of $N_{j_\delta}(\beta)$ and $M_{g_\delta}(\beta)$ in the finite dimensional space $\mathcal{V}^h_1$ of piecewise linear, continuous finite elements. Utilizing the variational discretization concept \cite{Hin05} of $\left(\mathcal{P}_{\delta,\rho}\right)$
that avoids explicit discretization of the control variable, we then consider the discrete regularized problem corresponding to $\left(\mathcal{P}_{\delta,\rho}\right)$, i.e. the following minimization problem
$$
\min_{\beta\in\mathcal{S}_{ad}} J^h_{\delta,\rho}(\beta), \quad  J^h_{\delta,\rho}(\beta) := \big\|N^h_{j_\delta}(\beta) - M^h_{g_\delta}(\beta) \big\|^2_{L^2(\Omega)} + \rho\|\beta - \beta^*\|^2_{L^2(\Omega)} \eqno \big(\mathcal{P}^h_{\delta,\rho}\big)
$$ 
which also attains a minimizer $\beta^h_{\delta,\rho}$ satisfying the relation (cf. \Cref{discrete1})
\begin{align*}
\beta^h_{\delta,\rho} (x)= \mathcal{P}_{[\underline{\beta}, \overline{\beta}]} \left( \frac{1}{\rho} \big(N^h_{j_\delta}(\beta^h_{\delta,\rho})(x)A^h_N(\beta^h_{\delta,\rho})(x) - M^h_{g_\delta}(\beta^h_{\delta,\rho}) (x)A^h_M(\beta^h_{\delta,\rho})(x)\big) + \beta^*(x)\right)
\end{align*}
a.e. in $\Omega$, where $\mathcal{P}_{[\underline{\beta}, \overline{\beta}]} (c) := \max\left( \underline{\beta}, \min\big(c, \overline{\beta}\big)\right)$, the states $A^h_N$ and $A^h_M$ are finite $\mathcal{V}^h_1$-element approximations of solutions to suitably chosen adjoint problems. This identity will be exploited to the gradient projection algorithm presented in \Cref{iterative}.

In \Cref{proof} we show that the proposed finite element method is stable, i.e. if the regularization parameter and the observation data are both fixed, then the sequence of minimizers $\big(\beta^h_{\delta,\rho}\big)_{h>0}$ to $\big(\mathcal{P}^h_{\delta,\rho}\big)$ can be extracted a subsequence which converges in the $L^2(\Omega)$-norm to a solution of $\big(\mathcal{P}_{\delta,\rho}\big)$ as the mesh size $h$ of the triangulation $\mathcal{T}^h$ tends to zero. Furthermore as $h,\delta \to 0$ and with an appropriate a priori regularization parameter choice $\rho=\rho(h,\delta) \to 0$,
the whole sequence $\big(\beta^h_{\delta,\rho}\big)_{\rho>0}$ converges in the $L^2(\Omega)$-norm to the $\beta^*$-minimum-norm solution $\beta^\dag$ of the
identification problem  defined by
$$\beta^\dag = \arg \min_{\left\{\beta \in \mathcal{S}_{ad} ~|~ N_{j^\dag}(\beta) = M_{g^\dag}(\beta) \right\}} \|\beta - \beta^*\|_{L^2(\Omega)}.$$
The corresponding state sequences $\big( N^{h}_{j_{\delta}}\big(\beta^h_{\delta,\rho}\big)\big)_{\rho>0}$ and $\big( M^{h}_{g_{\delta}}\big(\beta^h_{\delta,\rho}\big)\big)_{\rho>0}$ then converge in the $H^1(\Omega)$-norm to the exact state $\Phi^\dag = \Phi(j^\dag,g^\dag,\beta^\dag)$ of the problem \cref{17-5-16ct1}--\cref{17-5-16ct1***}. 

Our numerical implementation will be presented in  \Cref{iterative}. First, for the numerical
solution of the discrete regularized problem $\big(\mathcal{P}^h_{\delta,\rho}\big)$ we employ a gradient projection algorithm with Armijo steplength rule. In \Cref{exam1} we assume that observations are available on the bottom surface of the domain. \Cref{exam2} is a continuity of the first one, where we investigate the effect of the regularization parameter choice rule and previous iteration processes as well on the final computed numerical result. In case observations taking on the bottom and left surface the computation is given in \Cref{exam3}, while \Cref{exam4} is devoted to multiple measurements.

To complete this introduction we wish to mention briefly some parameter identification problems in PDEs from boundary observations. The authors Xie and Zou \cite{XiZo05}, Xu and Zou \cite{XuZo15} have used finite element methods to numerically recovered the fluxes on the inaccessible boundary $\Gamma_i$ from measurement data of the state on the accessible boundary $\Gamma_a$, while the problem of identifying the Robin coefficient on $\Gamma_i$ is also investigated by Xu and Zou \cite{XuZo15a}. Recently, authors of \cite{HHQ19,HKQ18} adopted the variational approach of Kohn and Vogelius combined with quadratic stabilizing penalty term and total variation regularization technique to the source term and scalar diffusion coefficient identification, respectively, using observations available on the whole boundary.

Throughout the paper the symbol $A \preceq B$ refers to the inequality $A \le cB$ for some constant $c$ independent of both $A$ and $B$. In the Lebesgue space $L^2(Q)$, where $Q$ is either $\Omega$, $\partial\Omega$ or $\Gamma$, we use for all $y, \widehat{y}\in L^2(Q)$ the inner product and the corresponding norm as
$(y,\widehat{y})_Q := \int_Q y(x)\cdot\widehat{y}(x) dx$ and $\|y\|_Q := (y,y)^{1/2}_Q.$
%If not stated otherwise we for short will write
%$\int_\Omega \cdots$ instead of $\int_\Omega \cdots dx$. 
We also use the standard notion of Sobolev spaces $H^k(Q) := W^k_2(Q)$ from, e.g., \cite{adams} with notations of its inner product $(\cdot,\cdot)_{k,Q}$, the norm $\|\cdot\|_{k,Q}$ and the semi-norm $|\cdot|_{k,Q}$. Note that $\|\cdot\|_{0,Q} = |\cdot|_{0,Q} =\|\cdot\|_Q$.

\section{Finite element discretization}\label{discrete1}

\subsection{Preliminaries}

We remark that the expression 
\begin{align*}
[u,v] := [u,v]_{(\boldsymbol{\alpha},\beta,\sigma)} := (\boldsymbol{\alpha} \nabla u, \nabla v)_\Omega + (\beta u,v)_\Omega + (\sigma u, v)_{\partial\Omega} 
\end{align*}
generates an inner product on the space
$ H^1(\Omega)$ which is equivalent to the usual one, i.e. there exist positive constants $c_1, c_2$ such that
\begin{align}\label{18-10-16ct1}
c_{1}\|u\|_{1, \Omega} \le [u,u]_{(\boldsymbol{\alpha},\beta,\sigma)} \le c_2\|u\|_{1,\Omega}
\end{align}
for all $u\in H^1(\Omega)$ and $\beta\in \mathcal{S}_{ad}$, where $c_1$ and $c_2$ are independent of $\beta$. Therefore, for each $\beta\in\mathcal{S}_{ad}$ the Neumann boundary value problem \cref{17-5-16ct2}--\cref{17-5-16ct2**}
defines a unique weak solution $u=u(\beta) := N_{j_\delta}(\beta)$ in the sense that $N_{j_\delta}(\beta) \in H^1(\Omega)$ and the equation
\begin{align}\label{17-10-16ct2}
\big[N_{j_\delta}(\beta),\phi\big]_{(\boldsymbol{\alpha},\beta,\sigma)}  = \<f,\phi\>_\Omega + \<j_\delta, \phi\>_{\Gamma} + \<j_0, \phi\>_{\partial\Omega\setminus\Gamma}  
\end{align}
is satisfied for all $\phi\in H^1(\Omega)$, where $\<\cdot,\cdot\>_\Omega$ and $\<\cdot, \cdot\>_{\Gamma}$ stand for the dual pairs $\<\cdot,\cdot\>_{\big(H^{-1}(\Omega), H^1(\Omega)\big)}$ and $\<\cdot, \cdot\>_{\big(H^{-1/2}(\Gamma), H^{1/2}(\Gamma)\big)}$, respectively. Furthermore, there holds the estimate
\begin{align}\label{17-10-16ct4}
\big\|N_{j_\delta}(\beta)\big\|_{1,\Omega} \preceq \|j_\delta\|_{H^{-1/2}(\Gamma)} + \|j_0\|_{H^{-1/2}(\partial\Omega\setminus\Gamma)} + \|f\|_{H^{-1}(\Omega)}.
\end{align}
A function $v =v(\beta) := M_{g_\delta}(\beta)$ is said to be a (unique) weak solution of the mixed boundary value problem \cref{17-5-16ct3}--\cref{17-5-16ct3**}
if $M_{g_\delta}(\beta)\in H^1(\Omega)$ with $M_{g_\delta}(\beta)_{|\Gamma} = g_\delta$ and the equation
\begin{align}\label{17-10-16ct2*}
\big[M_{g_\delta}(\beta),\phi\big]_{(\boldsymbol{\alpha},\beta,\sigma)}  = \<f,\phi\>_\Omega  + \<j_0, \phi\>_{\partial\Omega\setminus\Gamma} 
\end{align}
is satisfied for all $\phi\in H^1_0(\Omega\cup\Gamma)$, where 
$
H^1_0(\Omega\cup\Gamma) := \overline{C^\infty_c(\Omega\cup\Gamma)}^{H^1(\Omega)} = \{\phi \in H^1(\Omega) ~|~ \phi_{|\Gamma} = 0\}$,
the bar denotes the closure in $H^1(\Omega)$ and $C^\infty_c(\Omega\cup\Gamma)$ is the set of all functions $\phi\in C^\infty(\overline{\Omega})$ with $\mbox{supp}\phi$ being a compact subset of $\Omega\cup\Gamma$ (see, e.g., \cite[pp.\ 9, 67]{Troianiello}). The above weak solution satisfies  the estimate
\begin{align}\label{14-11-18ct3}
\|M_{g_\delta}(\beta)\|_{1,\Omega} \preceq \|g_\delta\|_{H^{1/2}(\Gamma)} + \|j_0\|_{H^{-1/2}(\partial\Omega\setminus\Gamma)} + \|f\|_{H^{-1}(\Omega)}.
\end{align}

\begin{remark}\label{27-12-18ct2}
We mention that under additional assumptions $\boldsymbol{\alpha} \in {W^{1,\infty}(\Omega)}^{d\times d}$, $j_\delta \in H^{1/2}(\Gamma)$, $j_0 \in H^{1/2}(\partial\Omega\setminus\Gamma)$, $g_\delta \in H^{3/2}(\Gamma)$, $f \in L^2(\Omega)$ and either $\partial\Omega$ is smooth of the class $C^{0,1}$ or the domain $\Omega$ is convex, the weak solutions $N_{j_\delta}(\beta),~ M_{g_\delta}(\beta)\in H^2(\Omega)$ for all $\beta \in \mathcal{S}_{ad}$ (see, e.g., \cite{Grisvad,Troianiello}) satisfying
\begin{align}\label{17-10-16ct4*}
\|N_{j_\delta}(\beta)\|_{2,\Omega} \preceq \|j_\delta\|_{H^{1/2}(\partial\Omega)} + \|j_0\|_{H^{1/2}(\partial\Omega\setminus\Gamma)} + \|f\|_{L^2(\Omega)}
\end{align}
and 
\begin{align}\label{17-10-16ct4**}
\|M_{g_\delta}(\beta)\|_{2,\Omega} \preceq \|g_\delta\|_{H^{3/2}(\partial\Omega)} + \|j_0\|_{H^{1/2}(\partial\Omega\setminus\Gamma)} + \|f\|_{L^2(\Omega)}.
\end{align}
\end{remark}

We now state some properties of the coefficient-to-solution operators 
$$N_{j_\delta},~ M_{g_\delta}: \mathcal{S}_{ad} \to H^1(\Omega).$$

\begin{lemma}\label{differential} Assume that the dimension $d\le 4$. Then the operators $N_{j_\delta}$ and $M_{g_\delta}$ are infinitely Fr\'echet differentiable on the set $\mathcal{S}_{ad}$ with respect to the $L^2(\Omega)$-norm. For $\beta\in\mathcal{S}_{ad}$ and $(\kappa_1, \ldots, \kappa_m) \in {L^\infty(\Omega)}^m$ the $m$-th order differentials $D_N^{(m)} := N_{j_\delta}^{(m)}(\beta)(\kappa_1, \ldots, \kappa_m) \in H^1(\Omega)$ and $D_M^{(m)} := M_{g_\delta}^{(m)}(\beta)(\kappa_1, \ldots, \kappa_m) \in H^1_0(\Omega\cup\Gamma)$ are the unique solutions to the variational equations
\begin{align*}
\big[D_N^{(m)},\phi\big]_{(\boldsymbol{\alpha},\beta,\sigma)} = -\sum_{i=1}^m \left(\kappa_i N_{j_\delta}^{(m-1)}(\beta) \xi_i,\phi\right)_\Omega, \en \forall \phi\in H^1(\Omega)
\end{align*}
and
\begin{align*}
\big[D_M^{(m)},\phi\big]_{(\boldsymbol{\alpha},\beta,\sigma)} = -\sum_{i=1}^m \left(\kappa_i M_{g_\delta}^{(m-1)}(\beta) \xi_i,\phi\right)_\Omega, \en \forall \phi\in H^1_0(\Omega\cup\Gamma)
\end{align*}
with $\xi_i := (\kappa_1, \ldots, \kappa_{i-1},\kappa_{i+1}, \ldots, \kappa_m) \in {L^\infty(\Omega)}^{m-1}$, respectively. Furthermore, 
\begin{align*}
\max\left(\big\|D_N^{(m)}\big\|_{H^1(\Omega)}, \big\|D_M^{(m)}\big\|_{H^1(\Omega)} \right) \preceq \prod_{i=1}^m \|\kappa_i\|_{L^2(\Omega)}.
\end{align*}
\end{lemma}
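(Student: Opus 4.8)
\emph{Overall strategy.} The plan is to argue by induction on the differentiation order $m$, treating $N_{j_\delta}$ and $M_{g_\delta}$ simultaneously, since both are governed by the same $\beta$-dependent bilinear form $[\cdot,\cdot]_{(\boldsymbol{\alpha},\beta,\sigma)}$ and differ only in the choice of test space ($H^1(\Omega)$ versus $H^1_0(\Omega\cup\Gamma)$). The two ingredients driving every estimate are the uniform coercivity and boundedness \eqref{18-10-16ct1} of $[\cdot,\cdot]$, whose constants are independent of $\beta\in\mathcal{S}_{ad}$ and which via the Lax--Milgram lemma makes each linearized variational problem uniquely solvable, together with the Sobolev embedding $H^1(\Omega)\hookrightarrow L^4(\Omega)$, valid precisely because $d\le4$. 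The latter converts products of two $H^1$-functions paired against an $L^2$-coefficient into bounds measured in the $L^2(\Omega)$-norm of the coefficient, which is what matches the asserted estimate.

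\emph{Base case $m=1$.} Writing $u=N_{j_\delta}(\beta)$ and $u_\kappa=N_{j_\delta}(\beta+\kappa)$ and subtracting the two copies of \eqref{17-10-16ct2} (using $[\cdot,\cdot]_{(\boldsymbol{\alpha},\beta+\kappa,\sigma)}=[\cdot,\cdot]_{(\boldsymbol{\alpha},\beta,\sigma)}+(\kappa\,\cdot,\cdot)_\Omega$), one obtains $[u_\kappa-u,\phi]_{(\boldsymbol{\alpha},\beta,\sigma)}=-(\kappa u_\kappa,\phi)_\Omega$ for all admissible $\phi$. I would first define the candidate derivative $D_N^{(1)}$ as the Lax--Milgram solution of $[D_N^{(1)},\phi]=-(\kappa u,\phi)_\Omega$; testing with $D_N^{(1)}$, applying H\"older with exponents $(2,4,4)$ and the embedding gives $\big\|D_N^{(1)}\big\|_{1,\Omega}\preceq\|\kappa\|_{L^2(\Omega)}\|u\|_{1,\Omega}\preceq\|\kappa\|_{L^2(\Omega)}$, the last step by the uniform a priori bound \eqref{17-10-16ct4}. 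The remainder $r:=u_\kappa-u-D_N^{(1)}$ then solves $[r,\phi]=-(\kappa(u_\kappa-u),\phi)_\Omega$, and testing with $r$ together with the split $\int|\kappa(u_\kappa-u)r|\le\|\kappa\|_{L^2(\Omega)}\|u_\kappa-u\|_{L^4(\Omega)}\|r\|_{L^4(\Omega)}$ yields $\|r\|_{1,\Omega}\preceq\|\kappa\|_{L^2(\Omega)}^2=o\big(\|\kappa\|_{L^2(\Omega)}\big)$. This establishes Fr\'echet differentiability with respect to the $L^2$-norm and identifies $N_{j_\delta}'(\beta)\kappa=D_N^{(1)}$.

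\emph{Induction step.} Assuming the claim up to order $m-1$, I would check that formally differentiating the order-$(m-1)$ identity in the direction $\kappa_m$ --- applying the product rule both to the solution and to the $\beta$-dependent term $(\beta\,\cdot,\cdot)_\Omega$ of the bilinear form --- reproduces exactly the stated equation for $D_N^{(m)}$, whose right-hand side is the symmetric sum $-\sum_{i=1}^m\big(\kappa_i N_{j_\delta}^{(m-1)}(\beta)\xi_i,\phi\big)_\Omega$. Existence and uniqueness of $D_N^{(m)}$ follow again from Lax--Milgram. Testing with $D_N^{(m)}$, bounding each of the finitely many terms by H\"older $(2,4,4)$ and the embedding, and inserting the induction bound $\big\|N_{j_\delta}^{(m-1)}(\beta)\xi_i\big\|_{1,\Omega}\preceq\prod_{j\ne i}\|\kappa_j\|_{L^2(\Omega)}$, gives $\big\|D_N^{(m)}\big\|_{1,\Omega}\preceq\prod_{i=1}^m\|\kappa_i\|_{L^2(\Omega)}$. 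That this bounded multilinear map is genuinely the Fr\'echet derivative of $N_{j_\delta}^{(m-1)}$ is verified as in the base case, by subtracting variational equations and estimating a remainder that is again quadratically small.

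\emph{The operator $M_{g_\delta}$ and the main obstacle.} For $M_{g_\delta}$ the argument is identical, with one extra remark: since the Dirichlet datum $g_\delta$ in \eqref{17-5-16ct3*} is independent of $\beta$, all differences $M_{g_\delta}(\beta+\kappa)-M_{g_\delta}(\beta)$ and hence all derivatives $D_M^{(m)}$ vanish on $\Gamma$, so they lie in $H^1_0(\Omega\cup\Gamma)$ and the test functions may legitimately be taken there, where \eqref{18-10-16ct1} still provides coercivity; the uniform bound \eqref{14-11-18ct3} replaces \eqref{17-10-16ct4}. The step I expect to be most delicate is not the algebra but the remainder estimates: one must distribute each product across the three H\"older factors so that the coefficient always carries an $L^2(\Omega)$-norm while the two remaining factors carry $L^4(\Omega)$-norms controlled by the embedding. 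This is the single place where the hypothesis $d\le4$ is indispensable, and for $d>4$ the estimate in the asserted $L^2$-form would no longer be available.
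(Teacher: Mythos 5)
Your proof is correct: the induction on $m$ via the Lax--Milgram lemma, the uniform (in $\beta\in\mathcal{S}_{ad}$) coercivity \eqref{18-10-16ct1}, and the embedding $H^1(\Omega)\hookrightarrow L^4(\Omega)$ combined with H\"older's inequality with exponents $(2,4,4)$ (which is exactly where $d\le 4$ enters) is precisely the ``standard argument'' that the paper invokes without writing out, so your route coincides with the intended one, including the correct treatment of $M_{g_\delta}$ via the test space $H^1_0(\Omega\cup\Gamma)$ and the bound \eqref{14-11-18ct3}. The only step you compress is the Lipschitz estimate $\|u_\kappa-u\|_{1,\Omega}\preceq\|\kappa\|_{L^2(\Omega)}$ needed to conclude that the remainder is $O\big(\|\kappa\|^2_{L^2(\Omega)}\big)$, but this follows immediately from the difference identity you already derived, tested with $u_\kappa-u$ and estimated with the same H\"older/embedding argument together with the uniform a priori bound \eqref{17-10-16ct4}.
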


\begin{proof}
The proof is based on standard arguments, therefore omitted here.
\end{proof}

We mention that the restriction on the dimension $d\le 4$ in the above \Cref{differential} is removed if the $L^\infty(\Omega)$-norm is taken into account instead of the $L^2(\Omega)$-norm (see, e.g., \cite{haqu10,haqu12}).

\begin{lemma}\label{weakly conv.}
Assume that the sequence $\left( \beta_n\right)_n\subset \mathcal{S}_{ad}$
converges weakly in $L^2(\Omega)$ to an element $\beta$. Then the sequences $\left(N_{j_\delta}(\beta_n)\right)_n$ and $\left(M_{g_\delta}(\beta_n)\right)_n$ converge respectively to $N_{j_\delta}(\beta)$ and $M_{g_\delta}(\beta)$ weakly in $H^1(\Omega)$ and strongly in the $L^2(\Omega)$-norm.
\end{lemma}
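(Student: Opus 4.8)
The plan is to use the $\beta$-uniform a priori bounds to extract a weakly convergent subsequence of states, to upgrade this to strong $L^2$-convergence through the compact embedding $H^1(\Omega)\hookrightarrow\hookrightarrow L^2(\Omega)$, and then to pass to the limit in the weak formulation; the one delicate point is the reaction term, where the weakly converging coefficients $\beta_n$ multiply the states. First I would note that every $\beta_n\in\mathcal{S}_{ad}$ obeys $\underline\beta\le\beta_n\le\overline\beta$, so the constants $c_1,c_2$ in the norm equivalence \eqref{18-10-16ct1}, and therefore the bounds \eqref{17-10-16ct4} and \eqref{14-11-18ct3}, are independent of $n$. Hence $\big(N_{j_\delta}(\beta_n)\big)_n$ and $\big(M_{g_\delta}(\beta_n)\big)_n$ are bounded in $H^1(\Omega)$, and by reflexivity I may extract a subsequence (not relabelled) with $N_{j_\delta}(\beta_n)\rightharpoonup u^*$ in $H^1(\Omega)$. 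The Rellich--Kondrachov theorem then yields $N_{j_\delta}(\beta_n)\to u^*$ strongly in $L^2(\Omega)$, while compactness of the trace operator $H^1(\Omega)\to L^2(\partial\Omega)$ gives the analogous strong convergence of the boundary traces.

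Next I would pass to the limit in \eqref{17-10-16ct2} for a fixed $\phi\in H^1(\Omega)$, term by term. The diffusion term $(\boldsymbol\alpha\nabla N_{j_\delta}(\beta_n),\nabla\phi)_\Omega$ converges because $\nabla N_{j_\delta}(\beta_n)\rightharpoonup\nabla u^*$ in $L^2(\Omega)$ paired against the fixed field $\boldsymbol\alpha\nabla\phi$, and the boundary term $(\sigma N_{j_\delta}(\beta_n),\phi)_{\partial\Omega}$ converges by the strong trace convergence together with $\sigma\in L^\infty(\partial\Omega)$. The decisive contribution is the reaction term, which I would split as
\begin{align*}
\big(\beta_n N_{j_\delta}(\beta_n),\phi\big)_\Omega = \big(\beta_n\big(N_{j_\delta}(\beta_n)-u^*\big),\phi\big)_\Omega + \big(\beta_n u^*,\phi\big)_\Omega.
\end{align*}
The first summand is bounded by $\overline\beta\,\|N_{j_\delta}(\beta_n)-u^*\|_\Omega\,\|\phi\|_\Omega\to0$ by the strong $L^2$-convergence, and the second converges to $(\beta u^*,\phi)_\Omega$ because the $L^\infty$-bound on $(\beta_n)_n$ upgrades its weak $L^2$-convergence to weak-$*$ convergence in $L^\infty(\Omega)$, which may be tested against $u^*\phi\in L^1(\Omega)$. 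Collecting the limits shows that $u^*$ satisfies \eqref{17-10-16ct2}, so $u^*=N_{j_\delta}(\beta)$ by uniqueness of the weak solution.

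Since the limit $N_{j_\delta}(\beta)$ is independent of the extracted subsequence, the standard argument that every subsequence admits a further subsequence with the same limit promotes both the weak $H^1$- and the strong $L^2$-convergence to the entire sequence. The reasoning for $M_{g_\delta}$ is identical once the test functions are restricted to $H^1_0(\Omega\cup\Gamma)$ and \eqref{17-10-16ct2*} is used in place of \eqref{17-10-16ct2}; the only extra point is that the weak $H^1$-limit still meets the Dirichlet condition on $\Gamma$, which holds because the affine set $\{w\in H^1(\Omega)\mid w_{|\Gamma}=g_\delta\}$ is convex and closed, hence weakly closed. I expect the reaction term $(\beta_n N_{j_\delta}(\beta_n),\phi)_\Omega$ --- the product of a merely weakly convergent factor with the states --- to be the main obstacle, and it is exactly this that is resolved by trading the weak convergence of one factor for the strong $L^2$-compactness of the other.
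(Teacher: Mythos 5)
Your proof is correct and follows essentially the same route as the paper's: uniform $H^1$-bounds via the $\beta$-independent coercivity, extraction of a weakly convergent subsequence upgraded to strong $L^2(\Omega)$ and $L^2(\partial\Omega)$ convergence by compact embeddings, and resolution of the reaction term by exactly the paper's splitting --- trading the strong $L^2$-convergence of $N_{j_\delta}(\beta_n)-u^*$ against the uniform $L^\infty$-bound on $\beta_n$, and pairing the weak$^*$ $L^\infty$-convergence of $\beta_n$ (which the paper obtains from weak$^*$ compactness of $\mathcal{S}_{ad}$ rather than by upgrading the weak $L^2$-limit, an immaterial difference) with the $L^1$-function $u^*\phi$. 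If anything, you are slightly more complete than the paper, since you spell out the subsequence-to-whole-sequence argument and the weak closedness of the affine Dirichlet constraint set for the mixed problem, both of which the paper leaves implicit.
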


\begin{proof}
We first note that since $\mathcal{S}_{ad}$ is a convex and closed subset of $L^2(\Omega)$, it is weakly closed in $L^2(\Omega)$ which implies that $\beta\in \mathcal{S}_{ad}$. Furthermore, it is a weakly$^*$ compact subset of $L^\infty(\Omega)$ (see, e.g., \cite[Remark 2.1]{quyen18}). 
Therefore, by the inequality \cref{17-10-16ct4} and the embeddings $H^1(\Omega)\hookrightarrow L^2(\Omega)$ as well as $H^1(\Omega)\hookrightarrow L^2(\partial\Omega)$ being compact (see, e.g., \cite{Lady84,Mc10}), the sequences $(N_n)_n :\equiv \big(N_{j_\delta}(\beta_n)\big)_n$ and $\left( \beta_n\right)_n$ have subsequences denoted by the same symbol such that
\begin{align}
&\beta_n \rightharpoonup \beta ~\mbox{weakly}^* ~\mbox{in}~ L^\infty(\Omega), \en \mbox{i.e.} \en (\beta_n,\xi)_\Omega \to (\beta,\xi)_\Omega \en \mbox{for all} \en \xi \in L^1(\Omega), \label{14-5-19ct1}\\
&N_n \rightharpoonup \theta_N ~\mbox{weakly in}~ H^1(\Omega) ~\mbox{and strongly in}~ L^2(\Omega) ~\mbox{and}~ L^2(\partial\Omega) \label{14-5-19ct2}
\end{align}
as $n\to\infty$, where $\theta_N$ is an element of $H^1(\Omega)$. We can show that $\theta_N = N_{j_\delta}(\beta)$. In fact, for all $\phi\in H^1(\Omega)$, we have from \eqref{17-10-16ct2} for all $n\in \mathbb{N}$ that
\begin{align*}
&\<f,\phi\>_\Omega + \<j_\delta, \phi\>_{\Gamma} + \<j_0, \phi\>_{\partial\Omega\setminus\Gamma}\notag\\
&~\quad = (\boldsymbol{\alpha} \nabla N_n, \nabla \phi)_\Omega + (\beta_n N_n,\phi)_\Omega + (\sigma N_n, \phi)_{\partial\Omega} \notag\\
&~\quad = (\boldsymbol{\alpha} \nabla \theta_N, \nabla \phi)_\Omega + (\beta \theta_N,\phi)_\Omega + (\sigma \theta_N, \phi)_{\partial\Omega} \notag\\
&~\quad \quad + (\boldsymbol{\alpha}\nabla (N_n - \theta_N),\nabla \phi)_\Omega + (\beta_n - \beta, \theta_N\phi)_\Omega \\
&~\quad \quad + (\beta_n(N_n - \theta_N),\phi)_\Omega + (\sigma(N_n-\theta_N),\phi)_{\partial\Omega}.
\end{align*}
By \eqref{14-5-19ct2}, we get $(\boldsymbol{\alpha}\nabla (N_n - \theta_N),\nabla \phi)_\Omega \to 0$ as $n \to \infty$. Furthermore, we have
\begin{align*}
&\left|(\beta_n(N_n - \theta_N),\phi)_\Omega + (\sigma(N_n-\theta_N),\phi)_{\partial\Omega}\right|\\
&~\quad \le \overline{\beta} \|\phi\|_\Omega \|N_n - \theta_N\|_\Omega
+\|\sigma\|_{L^\infty(\partial\Omega)} \|\phi\|_{\partial\Omega} \|N_n - \theta_N\|_{\partial\Omega} \to 0
\end{align*}
as $n \to \infty$, here we used \eqref{14-5-19ct2} again. Since $\theta_N\phi\in L^1(\Omega)$, it follows from \eqref{14-5-19ct1} that $(\beta_n - \beta, \theta_N\phi)_\Omega \to 0$ as $n \to \infty$. Therefore, sending $n \to \infty$ in the above equation we arrive at 
$$\<f,\phi\>_\Omega + \<j_\delta, \phi\>_{\Gamma} + \<j_0, \phi\>_{\partial\Omega\setminus\Gamma}
 = (\boldsymbol{\alpha} \nabla \theta_N, \nabla \phi)_\Omega + (\beta \theta_N,\phi)_\Omega + (\sigma \theta_N, \phi)_{\partial\Omega}$$
 for all $\phi\in H^1(\Omega)$, that means $\theta_N = N_{j_\delta}(\beta)$. With similar arguments we also obtain that $\left(M_{g_\delta}(\beta_n)\right)_n$ converges to $M_{g_\delta}(\beta)$ weakly in $H^1(\Omega)$, which finishes the proof.
\end{proof}

Together with \cref{17-5-16ct2}--\cref{17-5-16ct3**}, we consider {\it two adjoint problems}
\begin{align}
-\nabla \cdot \big(\boldsymbol{\alpha} \nabla A_N \big) + \beta A_N 
&= N_{j_\delta}(\beta) - M_{g_\delta}(\beta) \en \mbox{in} \en \Omega \label{17-10-16ct1*}\\
\boldsymbol{\alpha} \nabla A_N \cdot \vec{\boldsymbol{n}} +\sigma A_N &= 0 \en \mbox{on} \en \partial\Omega\label{17-10-16ct1***}
\end{align}
and 
\begin{align}
-\nabla \cdot \big(\boldsymbol{\alpha} \nabla A_M \big) + \beta A_M 
&= N_{j_\delta}(\beta) - M_{g_\delta}(\beta) \en \mbox{in} \en \Omega \label{17-10-16ct1*+}\\
A_M &= 0 \en \mbox{on} \en \Gamma\label{17-10-16ct1**+}\\
\boldsymbol{\alpha} \nabla A_M \cdot \vec{\boldsymbol{n}} +\sigma A_M &= 0 \en \mbox{on} \en \partial\Omega \setminus \Gamma\label{17-10-16ct1***+}
\end{align}
that attain unique weak solutions $A_N = A_N(\beta)$ and $A_M = A_M(\beta)$ in the sense that $A_N(\beta) \in H^1(\Omega)$ and $A_M (\beta)\in H^1_0(\Omega\cup\Gamma)$ satisfy the variational equations
\begin{align}\label{7-11-18ct3}
[A_N(\beta),\phi]_{(\boldsymbol{\alpha},\beta,\sigma)} &= \big(N_{j_\delta}(\beta) - M_{g_\delta}(\beta),\phi\big)_\Omega, \en \forall \phi\in H^1(\Omega)\\
[A_M(\beta),\phi]_{(\boldsymbol{\alpha},\beta,\sigma)} &= \big(N_{j_\delta}(\beta) - M_{g_\delta}(\beta),\phi\big)_\Omega, \en \forall \phi\in H^1_0(\Omega\cup\Gamma).
\end{align}
Furthermore,
\begin{align*}
&\max\left(\|A_N(\beta)\|_{1,\Omega}, \|A_M(\beta)\|_{1,\Omega} \right)  \\
&~\quad \preceq \|j_\delta\|_{H^{-1/2}(\Gamma)} + \|g_\delta\|_{H^{1/2}(\Gamma)} + \|j_0\|_{H^{-1/2}(\partial\Omega\setminus\Gamma)} + \|f\|_{H^{-1}(\Omega)}.
\end{align*}

\begin{theorem}\label{existance}
The minimization problem 
$$
\min_{\beta \in \mathcal{S}_{ad}} J_{\delta,\rho}(\beta), \quad J_{\delta,\rho} (\beta) := \big\|N_{j_\delta}(\beta)- M_{g_\delta}(\beta)\big\|^2_\Omega + \rho\|\beta-\beta^*\|^2_\Omega \eqno \left(\mathcal{P}_{\delta,\rho}\right) 
$$
attains a minimizer $\beta_{\delta,\rho}$ which satisfies the identity
\begin{align*}
\beta_{\delta,\rho} (x)= \mathcal{P}_{[\underline{\beta}, \overline{\beta}]} \left( \frac{1}{\rho} \big(N_{j_\delta}(\beta_{\delta,\rho})(x)A_N(\beta_{\delta,\rho})(x) - M_{g_\delta}(\beta_{\delta,\rho}) (x) A_M(\beta_{\delta,\rho})(x)\big) + \beta^*(x)\right) 
\end{align*}
a.e. in $\Omega$, where $A_N,~ A_M$ come from \cref{17-10-16ct1*}--\cref{17-10-16ct1***+}.
\end{theorem}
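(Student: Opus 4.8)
The plan is to establish existence of a minimizer by the direct method of the calculus of variations and then to derive the stated identity as the first-order optimality condition on the convex admissible set $\mathcal{S}_{ad}$.

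For existence, I would first observe that $J_{\delta,\rho} \ge 0$, so $m := \inf_{\beta \in \mathcal{S}_{ad}} J_{\delta,\rho}(\beta)$ is finite, and take a minimizing sequence $(\beta_n)_n \subset \mathcal{S}_{ad}$. Since $\mathcal{S}_{ad}$ is bounded in $L^\infty(\Omega)$ and hence in $L^2(\Omega)$, reflexivity provides a subsequence (not relabeled) with $\beta_n \rightharpoonup \beta_{\delta,\rho}$ weakly in $L^2(\Omega)$; convexity and closedness of $\mathcal{S}_{ad}$ give weak closedness, so $\beta_{\delta,\rho} \in \mathcal{S}_{ad}$. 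Invoking \Cref{weakly conv.}, the states $N_{j_\delta}(\beta_n)$ and $M_{g_\delta}(\beta_n)$ converge strongly in $L^2(\Omega)$ to $N_{j_\delta}(\beta_{\delta,\rho})$ and $M_{g_\delta}(\beta_{\delta,\rho})$, so the data-fit term passes to the limit, while the penalty $\|\cdot-\beta^*\|_\Omega^2$ is weakly lower semicontinuous. Together these yield $J_{\delta,\rho}(\beta_{\delta,\rho}) \le \liminf_n J_{\delta,\rho}(\beta_n) = m$, so $\beta_{\delta,\rho}$ is a minimizer.

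For the identity, I would differentiate $J_{\delta,\rho}$ in an admissible direction $h \in L^\infty(\Omega)$. Writing $N := N_{j_\delta}(\beta_{\delta,\rho})$, $M := M_{g_\delta}(\beta_{\delta,\rho})$ and using \Cref{differential} with $m=1$, the sensitivities $D_N := N_{j_\delta}'(\beta_{\delta,\rho})h$ and $D_M := M_{g_\delta}'(\beta_{\delta,\rho})h$ solve $[D_N,\phi] = -(hN,\phi)_\Omega$ for all $\phi \in H^1(\Omega)$ and $[D_M,\phi] = -(hM,\phi)_\Omega$ for all $\phi \in H^1_0(\Omega\cup\Gamma)$, and
\begin{align*}
J_{\delta,\rho}'(\beta_{\delta,\rho})h = 2\big(N-M,\, D_N - D_M\big)_\Omega + 2\rho\big(\beta_{\delta,\rho}-\beta^*,\, h\big)_\Omega.
\end{align*}
The crucial step is to eliminate $D_N,D_M$ via the adjoint states from \cref{7-11-18ct3}. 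Testing the $A_N$-equation with $\phi = D_N \in H^1(\Omega)$ and using the symmetry of $[\cdot,\cdot]_{(\boldsymbol{\alpha},\beta,\sigma)}$ gives $(N-M, D_N)_\Omega = [A_N, D_N] = [D_N, A_N] = -(hN, A_N)_\Omega$; since $A_M, D_M \in H^1_0(\Omega\cup\Gamma)$, the analogous substitution in the $A_M$-equation yields $(N-M, D_M)_\Omega = -(hM, A_M)_\Omega$. Subtracting and collecting terms,
\begin{align*}
J_{\delta,\rho}'(\beta_{\delta,\rho})h = 2\int_\Omega h\Big[\rho\big(\beta_{\delta,\rho}-\beta^*\big) - \big(N A_N - M A_M\big)\Big]\,dx,
\end{align*}
with $A_N = A_N(\beta_{\delta,\rho})$ and $A_M = A_M(\beta_{\delta,\rho})$.

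Finally, since $\beta_{\delta,\rho}$ minimizes over the convex set $\mathcal{S}_{ad}$, the variational inequality $J_{\delta,\rho}'(\beta_{\delta,\rho})(\beta - \beta_{\delta,\rho}) \ge 0$ holds for all $\beta \in \mathcal{S}_{ad}$. A standard pointwise localization argument (choosing $\beta$ equal to $\beta_{\delta,\rho}$ off an arbitrary set of positive measure and perturbing it within $[\underline{\beta},\overline{\beta}]$ there) converts this into the a.e. complementarity condition, which is precisely the statement that $\beta_{\delta,\rho}(x)$ equals the projection of $\tfrac1\rho\big(N A_N - M A_M\big)(x) + \beta^*(x)$ onto $[\underline{\beta},\overline{\beta}]$, i.e. the claimed formula. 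I expect the main obstacle to be the adjoint manipulation: one must track carefully that the Neumann pair $(N, A_N)$ lives in $H^1(\Omega)$ while the mixed pair $(M, A_M)$ lives in $H^1_0(\Omega\cup\Gamma)$, so that each test-function substitution is admissible and the symmetric form may be legitimately reversed; the subsequent passage from the integral inequality to the pointwise projection is routine but also deserves care.
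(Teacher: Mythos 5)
Your proposal is correct and takes essentially the same route as the paper's proof: existence via weak $L^2$-compactness of $\mathcal{S}_{ad}$ combined with \Cref{weakly conv.}, and the projection identity via the first-order optimality condition, the sensitivity equations of \Cref{differential}, and the adjoint states \cref{17-10-16ct1*}--\cref{17-10-16ct1***+} exploited through the symmetry of $[\cdot,\cdot]_{(\boldsymbol{\alpha},\beta,\sigma)}$, with careful matching of the test spaces $H^1(\Omega)$ and $H^1_0(\Omega\cup\Gamma)$. The only difference is that you make explicit two steps the paper compresses, namely the direct method for existence and the localization argument converting the variational inequality into the a.e.\ pointwise projection formula.
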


\begin{proof}
The existence of a minimizer $\beta$ follows directly from \Cref{weakly conv.}. It remains to  show the above identity. Due to the first order optimality condition for the minimizer $\beta$, we get for all $\gamma \in \mathcal{S}_{ad}$ that $J'_{\delta,\rho}(\beta)(\gamma - \beta) \ge 0$. Setting $\kappa := \gamma-\beta$, we then obtain
\begin{align*}
\big(N_{j_\delta}(\beta)- M_{g_\delta}(\beta), N'_{j_\delta}(\beta) \kappa - M'_{g_\delta}(\beta)\kappa \big)_\Omega + \rho(\beta-\beta^*, \kappa)_\Omega \ge 0.
\end{align*}
It follows from \Cref{differential} that
\begin{align*}
&\big(N_{j_\delta}(\beta)- M_{g_\delta}(\beta), N'_{j_\delta}(\beta) \kappa - M'_{g_\delta}(\beta)\kappa \big)_\Omega\\ 
&~\quad = \big(N_{j_\delta}(\beta)- M_{g_\delta}(\beta), N'_{j_\delta}(\beta) \kappa \big)_\Omega - \big(N_{j_\delta}(\beta)- M_{g_\delta}(\beta), M'_{g_\delta}(\beta)\kappa\big)_\Omega\\
&~\quad = [A_N(\beta), N'_{j_\delta}(\beta)\kappa]_{(\boldsymbol{\alpha},\beta,\sigma)} - [A_M(\beta), M'_{g_\delta}(\beta)\kappa]_{(\boldsymbol{\alpha},\beta,\sigma)} \\
&~\quad = -(A_N(\beta) N_{j_\delta}(\beta),\kappa)_\Omega + (A_M(\beta) M_{g_\delta}(\beta), \kappa)_\Omega
\end{align*}
which yields
$$
\left( \frac{1}{\rho} \big(N_{j_\delta}(\beta)A_N(\beta) - M_{g_\delta}(\beta)A_M(\beta)\big) + \beta^* - \beta , \gamma - \beta \right)_\Omega \le 0
$$
for all $\gamma \in \mathcal{S}_{ad}$. This completes the proof.
\end{proof}

%Now, we are in the position to prove the main result of this section.

\subsection{Finite element discretization}\label{discrete}

Let $\left(\mathcal{T}^h\right)_{0<h<1}$ be a family of quasi-uniform triangulations of the domain $\overline{\Omega}$ with the mesh size $h$. 
For the definition of the discretization space of the state
functions let us denote
\begin{align*}
\mathcal{V}_k^h := \left\{v^h\in C(\overline\Omega)
~|~{v^h}_{|T} \in \mathcal{P}_k, ~~\forall
T\in \mathcal{T}^h\right\} \quad\mbox{and}\quad
\mathcal{V}_{1,0}^h := \mathcal{V}_1^h \cap H^1_0(\Omega\cup\Gamma),
\end{align*}
where $\mathcal{P}_{k}$ consists of all polynomial functions of degree
less than or equal to $k$. For each $\beta\in\mathcal{S}_{ad}$ the variational equations
\begin{align}
\big[u^h, \phi^h\big]_{(\boldsymbol{\alpha},\beta,\sigma)} &= \big\<f,\phi^h\big\>_\Omega + \big\<j_\delta, \phi^h\big\>_{\Gamma} + \big\<j_0, \phi^h\big\>_{\partial\Omega\setminus\Gamma}  \quad \forall \phi^h\in
\mathcal{V}_1^h \label{10/4:ct1}\\
\big[ v^h, \phi^h\big]_{(\boldsymbol{\alpha},\beta,\sigma)} &= \big\<f,\phi^h\big\>_\Omega  + \big\<j_0, \phi^h\big\>_{\partial\Omega\setminus\Gamma} \quad \forall \phi^h\in
\mathcal{V}_{1,0}^h \quad \mbox{and} \quad {v^h}_{|\Gamma} = g_\delta \label{10/4:ct1*}
\end{align}
admit unique solutions $u^h := {N}^h_{j_\delta}(\beta) \in \mathcal{V}_1^h$ and $v^h := {M}^h_{g_\delta}(\beta) \in \mathcal{V}_1^h$, respectively. Furthermore, the estimates
\begin{align}
\big\|{N}^h_{j_\delta}(\beta) \big\|_{H^1(\Omega)} 
&\preceq \|j_\delta\|_{H^{-1/2}(\Gamma)} + \|j_0\|_{H^{-1/2}(\partial\Omega\setminus\Gamma)} + \|f\|_{H^{-1}(\Omega)}  \label{18/5:ct1}\\
\big\|{M}^h_{g_\delta}(\beta) \big\|_{H^1(\Omega)} 
&\preceq \|g_\delta\|_{H^{1/2}(\Gamma)} + \|j_0\|_{H^{-1/2}(\partial\Omega\setminus\Gamma)} + \|f\|_{H^{-1}(\Omega)}  \label{18/5:ct1*}
\end{align}
hold true.

The solutions $A_N=A_N(\beta)$ and $A_M=A_M(\beta)$ of the adjoint problems \cref{17-10-16ct1*}--\cref{17-10-16ct1***+} are approximated by $A^h_N = A^h_N(\beta)\in
\mathcal{V}_{1}^h$ and $A^h_M = A^h_M(\beta)\in
\mathcal{V}_{1,0}^h$ satisfying 
\begin{align}
\big[ A^h_N(\beta), \phi^h\big]_{(\boldsymbol{\alpha},\beta,\sigma)} &= \big( {N}^h_{j_\delta}(\beta) -{M}^h_{g_\delta}(\beta), \phi^h\big)_{\Omega}  \quad \mbox{for all} \quad \phi^h\in
\mathcal{V}_{1}^h \label{10/4:ct1**}\\
\big[ A^h_M(\beta), \phi^h\big]_{(\boldsymbol{\alpha},\beta,\sigma)} &= \big( {N}^h_{j_\delta}(\beta) -{M}^h_{g_\delta}(\beta), \phi^h\big)_{\Omega}  \quad \mbox{for all} \quad \phi^h\in
\mathcal{V}_{1,0}^h. \label{10/4:ct1**+}
\end{align}
The Sobolev number of $W^m_q(\Omega)$ is defined by
$\mbox{sob}(W^m_q(\Omega)) := m-d/q.$

\begin{lemma}[Quasi-interpolation operator, see, e.g., \cite{Clement,nochetto,scott_zhang}]\label{moli.data}
There exists an operator $\Pi^h: L^1(\Omega) \rightarrow \mathcal{V}^h_{1}$ such that $\Pi^h \varphi^h = \varphi^h$
for all $\varphi^h \in \mathcal{V}^h_{1}$ and the limit
\begin{equation}\label{23/10:ct2*}
\lim_{h\to 0} \big\| \phi - \Pi^h \phi
\big\|_{W^m_q(\Omega)} =0 \quad\mbox{for all}\quad 1\le q\le\infty,~ 0\le m\le 1, \quad \phi \in W^m_q(\Omega).
\end{equation}
Furthermore, for all $T\in \mathcal{T}^h$ we have the local estimate
\begin{align}\label{17-10-16ct5}
\|\phi-\Pi^h\phi\|_{W^k_p(T)} \le C h_T^{\mbox{sob}(W^m_q(\Omega)) - \mbox{sob}(W^k_p(\Omega))} |\phi|_{W^m_q(T)},
\end{align}
where $0\le k\le m \le 2$ and $1\le p, q\le \infty$ such that $\mbox{sob}(W^m_q(\Omega)) > \mbox{sob}(W^k_p(\Omega))$. If $\phi\in W^1_1(\Omega)$ has a vanishing trace on a part $\Gamma \subset \partial\Omega$, then so does $\Pi^h\phi$. 
\end{lemma}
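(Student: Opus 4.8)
The plan is to realize $\Pi^h$ as a Scott--Zhang--Cl\'ement quasi-interpolant built from a biorthogonal system, and to extract every claim from three ingredients: local polynomial invariance, a scaling/Bramble--Hilbert argument on a reference configuration, and a density argument for the low-regularity convergence. Let $\{z\}$ denote the Lagrange nodes (vertices) of $\mathcal{V}^h_1$ with nodal basis $\{\phi_z\}\subset\mathcal{V}^h_1$. To each node $z$ I associate a simplex $\sigma_z$: a $d$-simplex $\sigma_z\in\mathcal{T}^h$ with $z\in\overline{\sigma_z}$ when $z\notin\Gamma$, and a $(d-1)$-face $\sigma_z\subset\Gamma$ with $z\in\overline{\sigma_z}$ when $z\in\Gamma$; on each $\sigma_z$ let $\psi_z$ be the $L^2(\sigma_z)$-dual of the traces of the nodal functions, so that $\int_{\sigma_z}\psi_z\phi_w=\delta_{zw}$, and set $\Pi^h\phi:=\sum_z\big(\int_{\sigma_z}\psi_z\phi\big)\phi_z$. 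Using $d$-simplices at interior nodes makes the functionals well defined on $L^1(\Omega)$, while the boundary functionals require only a trace on $\Gamma$ and hence are available on $W^1_1(\Omega)$, exactly the setting of the last assertion. Since $\mathcal{V}^h_1$ restricted to any $\sigma_z$ is affine, biorthogonality gives $\int_{\sigma_z}\psi_z\varphi^h=\varphi^h(z)$ for $\varphi^h\in\mathcal{V}^h_1$, whence $\Pi^h\varphi^h=\sum_z\varphi^h(z)\phi_z=\varphi^h$; in particular $\Pi^h$ reproduces $\mathcal{P}_1$ locally.

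For the local estimate \eqref{17-10-16ct5} I would argue by scaling on the patch $\omega_T:=\bigcup\{T'\in\mathcal{T}^h:\overline{T'}\cap\overline T\neq\emptyset\}$. Because the supports of the $\phi_z$ meeting $T$ lie in $\omega_T$, the restriction $\Pi^h\phi|_T$ depends only on $\phi|_{\omega_T}$, and bounding the defining functionals together with shape-regularity yields the local stability $\|\Pi^h\phi\|_{W^k_p(T)}\preceq\|\phi\|_{W^k_p(\omega_T)}$ in the appropriately scaled norms. Invariance lets me insert an averaged Taylor polynomial $p\in\mathcal{P}_1$, writing $\phi-\Pi^h\phi=(\phi-p)-\Pi^h(\phi-p)$; this is where the ceiling $m\le2$ enters, as $\Pi^h$ reproduces only $\mathcal{P}_1$. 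Pulling back to a reference element $\widehat T$, the Bramble--Hilbert lemma controls the reference error by $|\widehat\phi|_{W^m_q(\widehat\omega_T)}$, and the affine scaling relations $|v|_{W^k_p(T)}\simeq h_T^{-(k-d/p)}|\widehat v|_{W^k_p(\widehat T)}$ and $|\widehat\phi|_{W^m_q(\widehat\omega_T)}\simeq h_T^{m-d/q}|\phi|_{W^m_q(\omega_T)}$ combine to give precisely the exponent $\mbox{sob}(W^m_q(\Omega))-\mbox{sob}(W^k_p(\Omega))=(m-d/q)-(k-d/p)$; quasi-uniformity then collapses the $\omega_T$-quantities into the stated $T$-local bound.

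The convergence \eqref{23/10:ct2*} at low regularity I would obtain from density and uniform stability rather than from \eqref{17-10-16ct5} directly, since the borderline $k=m$, $p=q$ violates the strict Sobolev inequality required there. Summing the local stability over $T$ with finite overlap gives the $h$-uniform bound $\|\Pi^h\phi\|_{W^m_q(\Omega)}\preceq\|\phi\|_{W^m_q(\Omega)}$. For $\phi\in C^\infty(\overline\Omega)$, \eqref{17-10-16ct5} applied with source index $m+1\le2$ and target index $m$ yields $\|\phi-\Pi^h\phi\|_{W^m_q(T)}\preceq h_T\,|\phi|_{W^{m+1}_q(T)}$, hence $\|\phi-\Pi^h\phi\|_{W^m_q(\Omega)}\to0$. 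An $\varepsilon/3$ argument using density of $C^\infty(\overline\Omega)$ in $W^m_q(\Omega)$ (for $1\le q<\infty$, with the case $q=\infty$ reduced to continuous $\phi$) then transfers the convergence to all $\phi\in W^m_q(\Omega)$.

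Finally, the trace property: if $\phi\in W^1_1(\Omega)$ and $\phi|_\Gamma=0$, then for every $\Gamma$-node $z$ the chosen $\sigma_z\subset\Gamma$ gives $\int_{\sigma_z}\psi_z\,\phi|_\Gamma=0$, so all $\Gamma$-nodal values of $\Pi^h\phi$ vanish; as $\Gamma$ is a union of element faces and $\Pi^h\phi$ is continuous and affine on each face, $\Pi^h\phi\equiv0$ on $\Gamma$. \textbf{The main obstacle}, and the single genuinely non-routine design point, is reconciling the two requirements on the domain of $\Pi^h$: definedness on all of $L^1(\Omega)$ forces $d$-dimensional averaging, whereas exact preservation of a vanishing trace forces $(d-1)$-dimensional face functionals that presuppose a trace (hence $\phi\in W^1_1$). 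The hybrid choice of $\sigma_z$ above resolves this, and the only extra care is to verify that the scaling and stability bounds hold uniformly across both the volume- and face-type functionals; everything else is standard Bramble--Hilbert bookkeeping.
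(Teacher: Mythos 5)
The paper itself contains no proof of this lemma: it is imported from the literature (Cl\'ement, Scott--Zhang, Nochetto--Siebert--Veeser) as a known result, so your proposal has to be judged against the standard constructions rather than against an in-paper argument. What you build is precisely the Scott--Zhang quasi-interpolant with the usual hybrid choice of $\sigma_z$, and the routine parts of your sketch --- biorthogonality giving $\Pi^h\varphi^h=\varphi^h$, the Bramble--Hilbert plus scaling derivation of the local error bound, the stability-plus-density argument for \eqref{23/10:ct2*}, and the preservation of vanishing traces --- are correct in outline.

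There are, however, two genuine gaps. First, the issue you yourself single out as ``the main obstacle'' is not actually resolved by your hybrid choice of $\sigma_z$: with face functionals at the $\Gamma$-nodes, your $\Pi^h$ is only defined on functions possessing a trace on $\Gamma$, i.e.\ on $W^1_1(\Omega)$, not on $L^1(\Omega)$, so the very first claim of the lemma is not established. Worse, it cannot be established by any constructive operator: if $\Pi^h$ were linear, continuous on $L^1(\Omega)$, and mapped every $\phi\in W^1_1(\Omega)$ with $\phi_{|\Gamma}=0$ into $\{v^h\in\mathcal{V}^h_1 : v^h_{|\Gamma}=0\}$, then, since $C^\infty_c(\Omega)$ is dense in $L^1(\Omega)$ and the zero-trace subspace of $\mathcal{V}^h_1$ is closed, $\Pi^h$ would map all of $L^1(\Omega)$ into that subspace, contradicting $\Pi^h\varphi^h=\varphi^h$ for any $\varphi^h$ with nonvanishing trace. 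Thus the $L^1(\Omega)$ domain, the projection property, and exact trace preservation are mutually incompatible; the honest statement takes $W^1_1(\Omega)$ as the domain (Scott--Zhang), which is in fact all this paper ever needs, since $\Pi^h$ is applied only to $H^1(\Omega)$ functions in the stability proof of \Cref{stability*}. Second, your closing step in the derivation of \eqref{17-10-16ct5} --- ``quasi-uniformity then collapses the $\omega_T$-quantities into the stated $T$-local bound'' --- is not valid: $\Pi^h\phi_{|T}$ genuinely depends on $\phi_{|\omega_T}$, and the estimate with $|\phi|_{W^m_q(T)}$ on the right-hand side is false for any averaging operator of this type (take $\phi$ smooth and supported in $\omega_T\setminus\overline{T}$: the right-hand side vanishes, while $\Pi^h\phi_{|T}$ need not, because some $\sigma_z$ attached to a node of $T$ lies outside $T$). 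What your argument actually proves, and what the cited references assert, is the bound with $|\phi|_{W^m_q(\omega_T)}$ over the element patch; that patch version is also sufficient for every subsequent use in the paper.
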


With the above notations at hand, the continuous regularized problem $\big(\mathcal{P}_{\delta,\rho}\big)$ can be discretized by
$$
\min_{\beta\in\mathcal{S}_{ad}} J^h_{\delta,\rho}(\beta), \quad  J^h_{\delta,\rho}(\beta) := \big\|N^h_{j_\delta}(\beta) - M^h_{g_\delta}(\beta) \big\|^2_\Omega + \rho\|\beta - \beta^*\|^2_\Omega. \eqno \big(\mathcal{P}^h_{\delta,\rho}\big)
$$ 
\begin{theorem}\label{dis-existance}
The discrete problem 
$\big(\mathcal{P}^h_{\delta,\rho}\big)$
has a minimizer $\beta^h_{\delta,\rho}$ which satisfies for a.e. in $\Omega$ the relation
\begin{align*}
\beta^h_{\delta,\rho} (x)= \mathcal{P}_{[\underline{\beta}, \overline{\beta}]} \left( \frac{1}{\rho} \big(N^h_{j_\delta}(\beta^h_{\delta,\rho})(x)A^h_N(\beta^h_{\delta,\rho})(x) - M^h_{g_\delta}(\beta^h_{\delta,\rho}) (x)A^h_M(\beta^h_{\delta,\rho})(x)\big) + \beta^*(x)\right). 
\end{align*}
\end{theorem}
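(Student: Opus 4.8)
The plan is to follow the template of the proof of \Cref{existance}, transferring each step to the finite-dimensional state spaces $\mathcal{V}_1^h$ and $\mathcal{V}_{1,0}^h$. Two ingredients are needed: existence of a minimizer, obtained from a discrete counterpart of \Cref{weakly conv.} together with weak lower semicontinuity of the regularization term; and the pointwise projection formula, obtained from the first-order optimality condition combined with the discrete adjoint states $A_N^h, A_M^h$ from \cref{10/4:ct1**}--\cref{10/4:ct1**+}. Note that since only the state is discretized, the control $\beta$ still ranges over the infinite-dimensional set $\mathcal{S}_{ad}$, so a weak compactness argument is genuinely required.

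For existence I would first establish a discrete analogue of \Cref{weakly conv.}: if $\beta_n \rightharpoonup \beta$ weakly in $L^2(\Omega)$ with $\beta_n \in \mathcal{S}_{ad}$, then $N_{j_\delta}^h(\beta_n) \to N_{j_\delta}^h(\beta)$ and $M_{g_\delta}^h(\beta_n) \to M_{g_\delta}^h(\beta)$ in $\mathcal{V}_1^h$. Since $\mathcal{S}_{ad}$ is convex and closed we have $\beta \in \mathcal{S}_{ad}$, and the uniform bound \cref{18/5:ct1} shows $\big(N_{j_\delta}^h(\beta_n)\big)_n$ is bounded in the \emph{finite-dimensional} space $\mathcal{V}_1^h$, so a subsequence converges strongly to some $u^h \in \mathcal{V}_1^h$. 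The weak--strong structure enters when passing to the limit in \cref{10/4:ct1}: for fixed $\phi^h$ the reaction term splits as $(\beta_n u_n^h, \phi^h)_\Omega = (\beta_n, (u_n^h - u^h)\phi^h)_\Omega + (\beta_n - \beta, u^h\phi^h)_\Omega + (\beta u^h, \phi^h)_\Omega$, whose first two summands vanish by strong convergence of $u_n^h$ and by $\beta_n \rightharpoonup \beta$ (here $u^h\phi^h \in L^2(\Omega)$). Uniqueness of the discrete solution identifies $u^h = N_{j_\delta}^h(\beta)$ and forces convergence of the whole sequence; the argument for $M_{g_\delta}^h$ is identical, using that the affine constraint ${v^h}_{|\Gamma} = g_\delta$ is preserved under strong convergence of traces. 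Applying this to a minimizing sequence, and using that $\beta \mapsto \|\beta - \beta^*\|_\Omega^2$ is weakly lower semicontinuous, yields a minimizer $\beta_{\delta,\rho}^h$.

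For the identity I would first record that the discrete operators are Fr\'echet differentiable exactly as in \Cref{differential}, with $D_N^h := (N_{j_\delta}^h)'(\beta)\kappa \in \mathcal{V}_1^h$ and $D_M^h := (M_{g_\delta}^h)'(\beta)\kappa \in \mathcal{V}_{1,0}^h$ solving
\begin{align*}
[D_N^h, \phi^h]_{(\boldsymbol{\alpha},\beta,\sigma)} = -(\kappa N_{j_\delta}^h(\beta), \phi^h)_\Omega \en \forall \phi^h \in \mathcal{V}_1^h, \quad [D_M^h, \phi^h]_{(\boldsymbol{\alpha},\beta,\sigma)} = -(\kappa M_{g_\delta}^h(\beta), \phi^h)_\Omega \en \forall \phi^h \in \mathcal{V}_{1,0}^h.
\end{align*}
Writing $\beta := \beta_{\delta,\rho}^h$ and $\kappa := \gamma - \beta$, the optimality condition $(J_{\delta,\rho}^h)'(\beta)(\gamma-\beta) \ge 0$ for all $\gamma \in \mathcal{S}_{ad}$ reads
\begin{align*}
\big(N_{j_\delta}^h(\beta) - M_{g_\delta}^h(\beta), D_N^h - D_M^h\big)_\Omega + \rho(\beta - \beta^*, \kappa)_\Omega \ge 0.
\end{align*}
Testing \cref{10/4:ct1**} with $\phi^h = D_N^h \in \mathcal{V}_1^h$ and the first derivative equation above with $\phi^h = A_N^h(\beta) \in \mathcal{V}_1^h$, and exploiting the symmetry of $[\cdot,\cdot]_{(\boldsymbol{\alpha},\beta,\sigma)}$, gives
\begin{align*}
\big(N_{j_\delta}^h(\beta) - M_{g_\delta}^h(\beta), D_N^h\big)_\Omega = -\big(A_N^h(\beta) N_{j_\delta}^h(\beta), \kappa\big)_\Omega,
\end{align*}
and the analogous identity for $D_M^h$ (via \cref{10/4:ct1**+}). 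Substituting these back produces the variational inequality
\begin{align*}
\left(\frac{1}{\rho}\big(N_{j_\delta}^h(\beta)A_N^h(\beta) - M_{g_\delta}^h(\beta)A_M^h(\beta)\big) + \beta^* - \beta, \gamma - \beta\right)_\Omega \le 0 \en \forall \gamma \in \mathcal{S}_{ad},
\end{align*}
and the pointwise projection formula follows from the standard characterization of the $L^2(\Omega)$-projection onto the box-constrained set $\mathcal{S}_{ad}$.

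The step requiring the most care is the bookkeeping of the test-function spaces in the adjoint manipulation: one must verify $D_M^h \in \mathcal{V}_{1,0}^h$, which holds because $M_{g_\delta}^h(\beta)$ carries the fixed Dirichlet data $g_\delta$ on $\Gamma$, so its $\beta$-derivative has vanishing trace there; this makes $D_M^h$ admissible in \cref{10/4:ct1**+} and, conversely, $A_M^h(\beta) \in \mathcal{V}_{1,0}^h$ admissible in the derivative equation for $D_M^h$. The Neumann side imposes no such restriction, and everything else is a direct discrete transcription of the proof of \Cref{existance}.
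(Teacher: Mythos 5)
Your proposal is correct and is essentially what the paper intends: its proof of \Cref{dis-existance} consists of the single remark that the argument ``follows exactly as in the continuous case,'' and your write-up is precisely that transcription of the proof of \Cref{existance} (existence via the weak-compactness argument of \Cref{weakly conv.}, here simplified by the finite-dimensionality of $\mathcal{V}_1^h$, and the projection identity via the discrete analogue of \Cref{differential} together with the adjoint equations \cref{10/4:ct1**}--\cref{10/4:ct1**+}). Your attention to the test-space bookkeeping for $D_M^h \in \mathcal{V}_{1,0}^h$ and $A_M^h \in \mathcal{V}_{1,0}^h$ is exactly the point one must verify in carrying the continuous argument over, so no gap remains.
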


\begin{proof}
The proof follows exactly as in the continuous case, we therefore omit here.
\end{proof}

%The above identity shows every minimizer of the discrete problem $\big(\mathcal{P}^h_{\delta,\rho}\big)$ belongs 
%%automatically 
%to the finite element space $\mathcal{V}^h_2$.
%%, provided that $\beta^* \in \mathcal{V}^h_2$. 
%Thus, taking this into
%account, a discretization of the admissible set $\mathcal{S}_{ad}$ can be avoided.

\section{Stability and Convergence}\label{proof}

We are in position to prove the stability of the proposed finite element method and the convergence of the regularized finite element approximations to the $\beta^*$-minimum-norm solution of the identification problem.

\begin{theorem}[Stability]\label{stability*}
Assume that the regularization parameter $\rho$ and the observation data $\big(j_\delta,g_\delta\big) \in H^{-1/2}(\Gamma) \times  H^{1/2}(\Gamma)$ are fixed. For each $n\in\mathbb{N}$ let $\beta_n := \beta^{h_n}_{\delta,\rho}$ be an arbitrary minimizer of $\big(\mathcal{P}^{h_n}_{\delta,\rho}\big)$. Then the sequence $(\beta_n)_n$ has a subsequence converging in the $L^2(\Omega)$-norm to an element $\beta_{\delta,\rho} \in \mathcal{S}_{ad}$. Furthermore, $\beta_{\delta,\rho}$ is a minimizer of $\big(\mathcal{P}_{\delta,\rho}\big)$.
\end{theorem}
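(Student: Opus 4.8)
The plan is to follow the standard four-step scheme for stability of regularized finite-element approximations: extract a weak limit of the coefficients, identify the limits of the discrete states, pass to the limit in the minimality inequality by lower semicontinuity, and finally upgrade weak to strong convergence through the regularization term. First, since every $\beta_n \in \mathcal{S}_{ad}$ and $\mathcal{S}_{ad}$ is a bounded, convex and closed subset of $L^2(\Omega)$, it is weakly compact; I would therefore extract a subsequence (not relabeled) with $\beta_n \rightharpoonup \beta_{\delta,\rho}$ weakly in $L^2(\Omega)$ and, as in \Cref{weakly conv.}, weakly$^*$ in $L^\infty(\Omega)$, with $\beta_{\delta,\rho}\in \mathcal{S}_{ad}$ by weak closedness.

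Second, and this is the crux, I would show that the discrete states $N_n := N^{h_n}_{j_\delta}(\beta_n)$ and $M_n := M^{h_n}_{g_\delta}(\beta_n)$ converge strongly in $L^2(\Omega)$ to $N_{j_\delta}(\beta_{\delta,\rho})$ and $M_{g_\delta}(\beta_{\delta,\rho})$. By the uniform bounds \cref{18/5:ct1} and \cref{18/5:ct1*} the sequences $(N_n)_n,(M_n)_n$ are bounded in $H^1(\Omega)$, so along a further subsequence $N_n \rightharpoonup \theta_N$ weakly in $H^1(\Omega)$ and strongly in $L^2(\Omega)$ and $L^2(\partial\Omega)$ by the compact embeddings, and likewise $M_n \rightharpoonup \theta_M$. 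To identify $\theta_N = N_{j_\delta}(\beta_{\delta,\rho})$ I would test the discrete equation \cref{10/4:ct1} with the quasi-interpolant $\Pi^{h_n}\phi \in \mathcal{V}_1^{h_n}$ of an arbitrary $\phi\in H^1(\Omega)$ and send $n\to\infty$, using $\Pi^{h_n}\phi \to \phi$ in $H^1(\Omega)$ from \Cref{moli.data}. The right-hand side and the diffusion and boundary terms pass to the limit by weak-times-strong pairing. The genuinely delicate term is the reaction term $(\beta_n N_n, \Pi^{h_n}\phi)_\Omega$: I would first split off $\Pi^{h_n}\phi - \phi$, controlled by the $L^2(\Omega)$-boundedness of $N_n$, then write $N_n = \theta_N + (N_n - \theta_N)$ to discard the strongly null part, leaving $(\beta_n, \theta_N\phi)_\Omega$, which converges to $(\beta_{\delta,\rho},\theta_N\phi)_\Omega$ since $\theta_N\phi\in L^1(\Omega)$ and $\beta_n \rightharpoonup \beta_{\delta,\rho}$ weakly$^*$ in $L^\infty(\Omega)$. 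This is precisely the discrete analogue of the argument in \Cref{weakly conv.}, and it is the main obstacle, because one must pass to the limit in a product of two sequences, neither of which converges strongly in the energy norm, while simultaneously handling the mesh-dependent test space. By uniqueness of weak solutions $\theta_N = N_{j_\delta}(\beta_{\delta,\rho})$; for $M_n$ the same argument applies with test functions in $\mathcal{V}_{1,0}^{h_n}$, using that $\Pi^{h}$ preserves the vanishing trace on $\Gamma$ and that $M_n|_\Gamma = g_\delta$ passes to the limit to fix $\theta_M|_\Gamma = g_\delta$. Since the limits are unique, every sub-subsequence yields the same limit, so the whole subsequence converges.

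Third, I would pass to the limit in the minimality $J^{h_n}_{\delta,\rho}(\beta_n) \le J^{h_n}_{\delta,\rho}(\gamma)$ for an arbitrary fixed $\gamma\in\mathcal{S}_{ad}$. On the left, the strong $L^2$ convergence of $N_n,M_n$ gives $\|N_n - M_n\|^2_\Omega \to \|N_{j_\delta}(\beta_{\delta,\rho}) - M_{g_\delta}(\beta_{\delta,\rho})\|^2_\Omega$, while weak lower semicontinuity of the norm gives $\liminf_{n} \|\beta_n - \beta^*\|^2_\Omega \ge \|\beta_{\delta,\rho} - \beta^*\|^2_\Omega$, whence $\liminf_{n} J^{h_n}_{\delta,\rho}(\beta_n) \ge J_{\delta,\rho}(\beta_{\delta,\rho})$. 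On the right, for the fixed coefficient $\gamma$ the standard a priori finite element error estimate (C\'ea's lemma together with \Cref{moli.data}) yields $N^{h_n}_{j_\delta}(\gamma) \to N_{j_\delta}(\gamma)$ and $M^{h_n}_{g_\delta}(\gamma) \to M_{g_\delta}(\gamma)$ in $L^2(\Omega)$, so $J^{h_n}_{\delta,\rho}(\gamma) \to J_{\delta,\rho}(\gamma)$. Chaining the inequalities gives $J_{\delta,\rho}(\beta_{\delta,\rho}) \le J_{\delta,\rho}(\gamma)$ for all $\gamma\in\mathcal{S}_{ad}$, that is, $\beta_{\delta,\rho}$ minimizes $\big(\mathcal{P}_{\delta,\rho}\big)$.

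Finally, choosing $\gamma = \beta_{\delta,\rho}$ in the chain forces $\lim_{n} J^{h_n}_{\delta,\rho}(\beta_n) = J_{\delta,\rho}(\beta_{\delta,\rho})$; subtracting the already convergent data-misfit part isolates $\rho\|\beta_n - \beta^*\|^2_\Omega \to \rho\|\beta_{\delta,\rho} - \beta^*\|^2_\Omega$, hence $\|\beta_n - \beta^*\|_\Omega \to \|\beta_{\delta,\rho} - \beta^*\|_\Omega$. Norm convergence together with the weak convergence $\beta_n - \beta^* \rightharpoonup \beta_{\delta,\rho} - \beta^*$ in the Hilbert space $L^2(\Omega)$ yields strong convergence $\beta_n \to \beta_{\delta,\rho}$ in $L^2(\Omega)$, which completes the proof.
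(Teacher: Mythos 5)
Your proposal is correct and follows essentially the same route as the paper: weak$^*$ extraction of the coefficients, identification of the discrete-state limits by testing with the quasi-interpolant $\Pi^{h_n}\phi$ and splitting the reaction term $(\beta_n N_n,\cdot)_\Omega$ via the strong $L^2$ convergence of $N_n$ and the weak$^*$ convergence of $\beta_n$, passage to the limit in the minimality inequality using FEM convergence for fixed $\gamma$, and the upgrade to strong $L^2$ convergence from convergence of the regularization term. The only cosmetic difference is that you invoke the Radon--Riesz property (weak convergence plus norm convergence implies strong convergence) where the paper expands $\rho\|\beta_n-\beta_\infty\|^2_\Omega$ explicitly, which is the same computation.
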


\begin{proof}
In view of the proof of \Cref{weakly conv.} we deduce that a subsequence of $(\beta_n)_n$ which is not relabeled and an element $(\beta_\infty,\theta_N,\theta_M) \in \mathcal{S}_{ad} \times H^1(\Omega) \times H^1(\Omega)$ exist such that 
\begin{align*}
\beta_n &\rightharpoonup \beta_\infty ~\mbox{weakly}^* ~\mbox{in}~ L^\infty(\Omega),\\
\left( N_{j_\delta}(\beta_n),~ M_{g_\delta}(\beta_n),~ N^{h_n}_{j_\delta}(\beta_n),~ M^{h_n}_{g_\delta}(\beta_n)  \right) &\rightharpoonup \left( N_{j_\delta}(\beta_\infty),~ M_{g_\delta}(\beta_\infty),~ \theta_N,~ \theta_M\right) 
\end{align*}
weakly in $H^1(\Omega)$ as $n \to \infty$. We first show that $\theta_N = N_{j_\delta}(\beta_\infty)$, i.e. the limit $\limn\big[N_{j_\delta}^{h_n}(\beta_n) - N_{j_\delta}(\beta_\infty), \phi\big]_{\boldsymbol{\alpha},\beta_\infty,\sigma} = 0$ holds true for all $\phi\in H^1(\Omega)$. In fact, we can rewrite
\begin{align}\label{6-11-18ct1}
&\big[N_{j_\delta}^{h_n}(\beta_n), \phi\big]_{\boldsymbol{\alpha},\beta_\infty,\sigma} \notag\\
&~\quad = \big[N_{j_\delta}^{h_n}(\beta_n), \phi\big]_{\boldsymbol{\alpha},\beta_n,\sigma} + \big(N_{j_\delta}^{h_n}(\beta_n), \beta_\infty \phi\big)_\Omega - \big( N_{j_\delta}^{h_n}(\beta_n), \beta_n\phi\big)_\Omega.
\end{align}
It follows from \cref{10/4:ct1} and \cref{17-10-16ct2} that
\begin{align*}
&\big[N_{j_\delta}^{h_n}(\beta_n), \phi\big]_{\boldsymbol{\alpha},\beta_n,\sigma} = \big[N_{j_\delta}^{h_n}(\beta_n), \Pi^{h_n}\phi\big]_{\boldsymbol{\alpha},\beta_n,\sigma} + \big[N_{j_\delta}^{h_n}(\beta_n), \phi - \Pi^{h_n}\phi\big]_{\boldsymbol{\alpha},\beta_n,\sigma} \notag\\
&~\quad= \big\<f,\Pi^{h_n}\phi\big\>_\Omega + \big\<j_\delta, \Pi^{h_n}\phi\big\>_{\Gamma} + \big\<j_0, \Pi^{h_n}\phi\big\>_{\partial\Omega\setminus\Gamma} + \big[N_{j_\delta}^{h_n}(\beta_n), \phi - \Pi^{h_n}\phi\big]_{\boldsymbol{\alpha},\beta_n,\sigma}\notag\\
&~\quad= \big\<f,\phi\big\>_\Omega + \big\<j_\delta, \phi\big\>_{\Gamma} +  \big\<j_0, \phi\big\>_{\partial\Omega\setminus\Gamma} + \big[N_{j_\delta}^{h_n}(\beta_n), \phi - \Pi^{h_n}\phi\big]_{\boldsymbol{\alpha},\beta_n,\sigma}\notag\\
&~\quad\quad + \big\<f,\Pi^{h_n}\phi - \phi\big\>_\Omega + \big\<j_\delta, \Pi^{h_n}\phi - \phi\big\>_{\Gamma} + \big\<j_0, \Pi^{h_n}\phi - \phi\big\>_{\partial\Omega\setminus\Gamma} 
\end{align*}
and so 
\begin{align}\label{6-11-18ct2}
&\limn\big[N_{j_\delta}^{h_n}(\beta_n), \phi\big]_{\boldsymbol{\alpha},\beta_n,\sigma} \notag\\
&~\quad = \big\<f,\phi\big\>_\Omega + \big\<j_\delta, \phi\big\>_{\Gamma} +  \big\<j_0, \phi\big\>_{\partial\Omega\setminus\Gamma} = \big[N_{j_\delta}(\beta_\infty), \phi\big]_{\boldsymbol{\alpha},\beta_\infty,\sigma},
\end{align}
where we used \cref{18/5:ct1} and \cref{23/10:ct2*}. Furthermore, we have that
\begin{align}\label{6-11-18ct3}
&\big(N_{j_\delta}^{h_n}(\beta_n), \beta_\infty \phi\big)_\Omega - \big( N_{j_\delta}^{h_n}(\beta_n), \beta_n\phi\big)_\Omega \notag\\
&~\quad = \big(N_{j_\delta}^{h_n}(\beta_n), \beta_\infty \phi\big)_\Omega - \big(\beta_n, \theta_N\phi\big)_\Omega - \big(N_{j_\delta}^{h_n}(\beta_n) -\theta_N, \beta_n \phi \big)_\Omega \notag\\
&~\quad \to \big(\theta_N, \beta_\infty \phi\big)_\Omega - \big(\beta_\infty, \theta_N\phi\big)_\Omega \notag\\
&~\quad = 0.
\end{align}
We thus derive from \cref{6-11-18ct1}--\cref{6-11-18ct3}  
$\limn\big[N_{j_\delta}^{h_n}(\beta_n), \phi\big]_{\boldsymbol{\alpha},\beta_\infty,\sigma} = \big[N_{j_\delta}(\beta_\infty), \phi\big]_{\boldsymbol{\alpha},\beta_\infty,\sigma}$. This also yields
\begin{align}\label{6-11-18ct4}
\limn\big\|N_{j_\delta}^{h_n}(\beta_n)- N_{j_\delta}(\beta_\infty) \big\|_\Omega = 0.
\end{align}
Likewise, we can show $\theta_M = M_{g_\delta}(\beta_\infty)$ and
\begin{align}\label{6-11-18ct4*}
\limn\big\|M_{g_\delta}^{h_n}(\beta_n)- M_{g_\delta}(\beta_\infty) \big\|_\Omega = 0.
\end{align} 
Consequently, for all $\beta\in\mathcal{S}_{ad}$ we arrive at
\begin{align*}
&\big\|N_{j_\delta}(\beta_\infty)-M_{g_\delta}(\beta_\infty) \big\|^2_\Omega + \rho\|\beta_\infty - \beta^*\|^2_\Omega \\ 
&~\quad\le \limn\big\|N_{j_\delta}^{h_n}(\beta_n)-M_{g_\delta}^{h_n}(\beta_n) \big\|^2_\Omega + \liminfn  \rho\|\beta_n - \beta^*\|^2_\Omega \\
&~\quad = \liminfn \left( \big\|N_{j_\delta}^{h_n}(\beta_n)-M_{g_\delta}^{h_n}(\beta_n) \big\|^2_\Omega +\rho \|\beta_n - \beta^*\|^2_\Omega\right) \\
&~\quad \le \limsupn \left( \big\|N_{j_\delta}^{h_n}(\beta_n)-M_{g_\delta}^{h_n}(\beta_n) \big\|^2_\Omega +\rho \|\beta_n - \beta^*\|^2_\Omega\right) \\
&~\quad \le \limsupn \left( \big\|N_{j_\delta}^{h_n}(\beta)-M_{g_\delta}^{h_n}(\beta) \big\|^2_\Omega +\rho \|\beta - \beta^*\|^2_\Omega\right)\\
&~\quad =  \big\|N_{j_\delta}(\beta)-M_{g_\delta}(\beta) \big\|^2_\Omega + \rho \|\beta - \beta^*\|^2_\Omega.
\end{align*}
This means that $\beta_\infty$ is a minimizer of $\big(\mathcal{P}_{\delta,\rho}\big)$. It remains to show that $(\beta_n)_n$ converges to $\beta_\infty$ in the $L^2(\Omega)$-norm. For this purpose we take $\beta = \beta_\infty$ in the last equation to get
\begin{align}\label{6-11-18ct5}
&\limn \left( \big\|N_{j_\delta}^{h_n}(\beta_n)-M_{g_\delta}^{h_n}(\beta_n) \big\|^2_\Omega +\rho \|\beta_n - \beta^*\|^2_\Omega\right) \notag\\
&~\quad = \big\|N_{j_\delta}(\beta_\infty)-M_{g_\delta}(\beta_\infty) \big\|^2_\Omega + \rho\|\beta_\infty - \beta^*\|^2_\Omega
\end{align}
and then write
\begin{align*}
&\rho\|\beta_n - \beta_\infty\|^2_\Omega \\
&~\quad = \rho\|(\beta_n - \beta^*) - (\beta_\infty - \beta^*)\|^2_\Omega\\
&~\quad = \rho\|\beta_n - \beta^*\|^2_\Omega + \rho\|\beta_\infty - \beta^*\|^2_\Omega - 2\rho(\beta_n - \beta^*, \beta_\infty - \beta^*)_\Omega\\
&~\quad = \rho\|\beta_\infty - \beta^*\|^2_\Omega - 2\rho(\beta_n - \beta^*, \beta_\infty - \beta^*)_\Omega \\
&~\quad\quad - \big\|N_{j_\delta}^{h_n}(\beta_n)-M_{g_\delta}^{h_n}(\beta_n) \big\|^2_\Omega + \big\|N_{j_\delta}^{h_n}(\beta_n)-M_{g_\delta}^{h_n}(\beta_n) \big\|^2_\Omega +\rho \|\beta_n - \beta^*\|^2_\Omega.
\end{align*}
By the equations \cref{6-11-18ct4}--\cref{6-11-18ct5}, we obtain
\begin{align*}
&\rho\limn\|\beta_n - \beta_\infty\|^2_\Omega \\
&~\quad = \rho\|\beta_\infty - \beta^*\|^2_\Omega - 2\rho(\beta_\infty - \beta^*, \beta_\infty - \beta^*)_\Omega \\
&~\quad\quad - \big\|N_{j_\delta}(\beta_\infty)-M_{g_\delta}(\beta_\infty) \big\|^2_\Omega + \big\|N_{j_\delta}(\beta_\infty)-M_{g_\delta}(\beta_\infty) \big\|^2_\Omega +\rho \|\beta_\infty - \beta^*\|^2_\Omega\\
&~\quad =0,
\end{align*}
which finishes the proof.
\end{proof}

To go further, we remark that, due to the assumption on consistency of the system \cref{17-5-16ct1}--\cref{17-5-16ct1***}, the set
$$
\Pi_{\mathcal{S}_{ad}}(j^\dag,g^\dag) := \left\{\beta \in \mathcal{S}_{ad} ~|~ N_{j^\dag}(\beta) = M_{g^\dag}(\beta) \right\}
$$
is nonempty, convex, bounded and closed in the $L^2(\Omega)$-norm. As a result,
there exists a unique solution $\beta^\dag$ of the problem
$$\min_{\beta \in \Pi_{\mathcal{S}_{ad}}(j^\dag,g^\dag)} \|\beta - \beta^*\|_{L^2(\Omega)},$$
which is called by $\beta^*$-minimum-norm solution to the
identification problem.
Let 
\begin{align}\label{19-11-18ct3}
\varrho^h_{j_\delta,g_\delta}(\beta) := \big\|N_{j_\delta}(\beta) - N^h_{j_\delta}(\beta)\big\|_{L^2(\Omega)} + \big\|M_{g_\delta}(\beta) - M^h_{g_\delta}(\beta)\big\|_{L^2(\Omega)}.
\end{align}
Due to the standard theory of the finite element method for elliptic problems (see, e.g., \cite{Brenner_Scott}), we get 
$$\lim_{h\to 0} \varrho^h_{j_\delta,g_\delta}(\beta) = 0 \quad\mbox{and}\quad
0\le \varrho^h_{j_\delta,g_\delta}(\beta) \preceq h^2
$$
in case $N_{j_\delta}(\beta),~ M_{g_\delta}(\beta) \in H^2(\Omega)$.

\begin{theorem}[Convergence]\label{stability2}
Let
$\limn h_n = 0$. Assume that $(\delta_n)_n$ and $(\rho_n)_n$ be any positive sequences such that
\begin{align}\label{31-8-16ct1}
\rho_n \to 0, \quad \frac{\delta_n}{\sqrt{\rho_n}} \to 0, \quad \mbox{and} \quad \frac{\varrho^{h_n}_{j^\dag,g^\dag}(\beta^\dag)}{\sqrt{\rho_n}} \to 0 \quad \mbox{as} \quad n\to\infty.
\end{align} 
Furthermore, assume that
$\big(j_{\delta_n},g_{\delta_n}\big) \in H^{-1/2}(\Gamma) \times  H^{1/2}(\Gamma)$ is a sequence satisfying the inequality
\begin{align}\label{20-11-18ct2}
\big\|j_{\delta_n} - j^\dag\big\|_{H^{-1/2}(\Gamma)} + \big\|g_{\delta_n} - g^\dag\big\|_{H^{1/2}(\Gamma)}\le \delta_n
\end{align}
and $\beta_n := \beta^{h_n}_{\delta_n,\rho_n}$ denotes an arbitrary minimizer of $\big(\mathcal{P}^{h_n}_{\delta_n,\rho_n}\big)$ for each $n\in \mathbb{N}$. Then: 

(i) The whole sequence $(\beta_n)_n$ converges in the $L^2(\Omega)$-norm to $\beta^\dag$.

(ii) The corresponding state sequences $\big( N^{h_n}_{j_{\delta_n}}(\beta_n)\big)_n$ and $\big( M^{h_n}_{g_{\delta_n}}(\beta_n)\big)_n$ converge in the $H^1(\Omega)$-norm to the exact state $\Phi^\dag = \Phi(j^\dag,g^\dag,\beta^\dag)$ of the problem \cref{17-5-16ct1}--\cref{17-5-16ct1***}.
\end{theorem}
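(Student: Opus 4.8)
The plan is to run the classical regularization-convergence scheme, exploiting that $\beta^\dag$ is an admissible competitor in $\big(\mathcal{P}^{h_n}_{\delta_n,\rho_n}\big)$. The optimality of $\beta_n$ gives
\begin{align*}
\big\|N^{h_n}_{j_{\delta_n}}(\beta_n) - M^{h_n}_{g_{\delta_n}}(\beta_n)\big\|^2_\Omega + \rho_n\|\beta_n - \beta^*\|^2_\Omega \le \big\|N^{h_n}_{j_{\delta_n}}(\beta^\dag) - M^{h_n}_{g_{\delta_n}}(\beta^\dag)\big\|^2_\Omega + \rho_n\|\beta^\dag - \beta^*\|^2_\Omega.
\end{align*}
First I would bound the discrete residual on the right. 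Inserting the exact-data continuous states and using $N_{j^\dag}(\beta^\dag) = M_{g^\dag}(\beta^\dag)$ (because $\beta^\dag \in \Pi_{\mathcal{S}_{ad}}(j^\dag,g^\dag)$), I split $N^{h_n}_{j_{\delta_n}}(\beta^\dag) - M^{h_n}_{g_{\delta_n}}(\beta^\dag)$ into two discrete data-perturbation parts, two finite element error parts, and the vanishing gap $N_{j^\dag}(\beta^\dag)-M_{g^\dag}(\beta^\dag)$. Linearity of the discrete solution maps in the data together with \cref{18/5:ct1}, \cref{18/5:ct1*} and \cref{20-11-18ct2} controls the first by $\delta_n$, while the second assembles into $\varrho^{h_n}_{j^\dag,g^\dag}(\beta^\dag)$ of \cref{19-11-18ct3}; hence the residual is $\preceq \delta_n + \varrho^{h_n}_{j^\dag,g^\dag}(\beta^\dag)$. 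Dividing by $\rho_n$ and using \cref{31-8-16ct1} yields the two facts I will use repeatedly: $\limsupn\|\beta_n-\beta^*\|^2_\Omega \le \|\beta^\dag-\beta^*\|^2_\Omega$ and $\big\|N^{h_n}_{j_{\delta_n}}(\beta_n) - M^{h_n}_{g_{\delta_n}}(\beta_n)\big\|_\Omega \to 0$.

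For (i), since $(\beta_n)_n\subset\mathcal{S}_{ad}$ is bounded I extract a subsequence with $\beta_n \rightharpoonup \bar\beta$ weakly$^*$ in $L^\infty(\Omega)$. To identify the state limits I write $N^{h_n}_{j_{\delta_n}}(\beta_n) - N_{j^\dag}(\bar\beta) = \big[N^{h_n}_{j_{\delta_n}}(\beta_n) - N^{h_n}_{j^\dag}(\beta_n)\big] + \big[N^{h_n}_{j^\dag}(\beta_n) - N_{j^\dag}(\bar\beta)\big]$; the first bracket is $\preceq\delta_n\to0$ and the second tends to $0$ in $L^2(\Omega)$ by verbatim repetition of the passage in the proof of \Cref{stability*} (fixed data $j^\dag$, $\beta_n\rightharpoonup\bar\beta$, $h_n\to0$). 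The analogous statement holds for $M$, so $N^{h_n}_{j_{\delta_n}}(\beta_n)\to N_{j^\dag}(\bar\beta)$ and $M^{h_n}_{g_{\delta_n}}(\beta_n)\to M_{g^\dag}(\bar\beta)$ strongly in $L^2(\Omega)$. Combined with the second fact above this forces $N_{j^\dag}(\bar\beta)=M_{g^\dag}(\bar\beta)$, i.e. $\bar\beta\in\Pi_{\mathcal{S}_{ad}}(j^\dag,g^\dag)$. Weak lower semicontinuity of the $L^2$-norm and the limsup bound give $\|\bar\beta-\beta^*\|_\Omega\le\|\beta^\dag-\beta^*\|_\Omega$, whereas minimality of $\beta^\dag$ over $\Pi_{\mathcal{S}_{ad}}(j^\dag,g^\dag)$ gives the reverse; hence $\bar\beta=\beta^\dag$ by uniqueness of the $\beta^*$-minimum-norm solution. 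The same chain forces $\|\beta_n-\beta^*\|_\Omega\to\|\beta^\dag-\beta^*\|_\Omega$, which together with $\beta_n\rightharpoonup\beta^\dag$ upgrades to strong convergence in the Hilbert space $L^2(\Omega)$; as $\beta^\dag$ is independent of the subsequence, the whole sequence converges.

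For (ii), I use that $\Phi^\dag = N_{j^\dag}(\beta^\dag) = M_{g^\dag}(\beta^\dag)$ and prove $H^1$-convergence directly by coercivity \cref{18-10-16ct1}. For the Neumann state I compare through the exact-data discrete solution, $N^{h_n}_{j_{\delta_n}}(\beta_n)-\Phi^\dag = \big[N^{h_n}_{j_{\delta_n}}(\beta_n)-N^{h_n}_{j^\dag}(\beta_n)\big] + \big[N^{h_n}_{j^\dag}(\beta_n)-\Phi^\dag\big]$, the first bracket being $\preceq\delta_n$ by \cref{18/5:ct1}. For $z_n := N^{h_n}_{j^\dag}(\beta_n)-\Phi^\dag$ I estimate $c_1\|z_n\|_{1,\Omega}^2 \le [z_n,z_n]_{(\boldsymbol{\alpha},\beta_n,\sigma)}$ and test with the conforming increment $\Pi^{h_n}z_n\in\mathcal{V}^{h_n}_1$, for which $z_n-\Pi^{h_n}z_n=\Pi^{h_n}\Phi^\dag-\Phi^\dag$; subtracting the discrete equation \cref{10/4:ct1} from the continuous equation \cref{17-10-16ct2} (and shifting the reaction term from $\beta^\dag$ to $\beta_n$) cancels the source and boundary contributions and leaves only the reaction gap $-\big((\beta_n-\beta^\dag)\Phi^\dag,\Pi^{h_n}z_n\big)_\Omega$ plus the interpolation term $\preceq c_2\|z_n\|_{1,\Omega}\|\Pi^{h_n}\Phi^\dag-\Phi^\dag\|_{1,\Omega}$. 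Now $\|(\beta_n-\beta^\dag)\Phi^\dag\|_\Omega\to0$ by dominated convergence (the factor $\beta_n-\beta^\dag$ is $L^\infty$-bounded and converges a.e. along a subsequence, with integrable majorant $(\overline\beta-\underline\beta)^2(\Phi^\dag)^2$), the interpolation term vanishes by \cref{23/10:ct2*}, and $\|z_n\|_{1,\Omega}$ is uniformly bounded, so an absorption argument gives $\|z_n\|_{1,\Omega}\to0$. The mixed state is identical after comparing through $M^{h_n}_{g^\dag}(\beta_n)$: the first bracket is $\preceq\delta_n$ by \cref{18/5:ct1*}, and since $M^{h_n}_{g^\dag}(\beta_n)$ and $\Phi^\dag=M_{g^\dag}(\beta^\dag)$ share the trace $g^\dag$ on $\Gamma$, the difference has vanishing trace, so \Cref{moli.data} places $\Pi^{h_n}z_n$ in $\mathcal{V}^{h_n}_{1,0}$ and the same computation with \cref{10/4:ct1*} and \cref{17-10-16ct2*} applies.

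The main obstacle is the simultaneous passage of the three limits $\beta_n\rightharpoonup\bar\beta$, $h_n\to0$ and $\delta_n\to0$ in part (i): one cannot invoke continuity of a single fixed solution operator, but must re-run the discrete-to-continuous identification of \Cref{stability*} while the data also vary. The device that makes this routine is the clean separation into a data-perturbation piece, absorbed by $\delta_n$ through the uniform (in $\beta$ and $h$) a priori estimates \cref{18/5:ct1}, \cref{18/5:ct1*}, and a fixed-data piece governed by the stability argument. A secondary care point is the nonhomogeneous Dirichlet datum in the $H^1$-estimate of the mixed state, which is circumvented by comparing through $M^{h_n}_{g^\dag}(\beta_n)$ so that the boundary contribution cancels exactly.
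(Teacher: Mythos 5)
Your proposal is correct, and its skeleton for part (i) --- optimality of $\beta_n$ against the competitor $\beta^\dag$, the splitting of the discrete residual into a $\delta_n$ data-perturbation part and the $\varrho^{h_n}_{j^\dag,g^\dag}(\beta^\dag)$ finite element part, the weak$^*$ extraction, identification of the limit by re-running the stability argument with exact data and varying coefficient, and the upgrade of weak convergence plus norm convergence to strong $L^2(\Omega)$ convergence --- coincides with the paper's proof. Where you genuinely depart from the paper is part (ii). The paper compares the two \emph{discrete} solutions $N^{h_n}_{j_{\delta_n}}(\beta_n)$ and $N^{h_n}_{j_{\delta_n}}(\beta^\dag)$ (same mesh, same noisy data, different coefficients): their difference is itself an admissible test function in $\mathcal{V}^{h_n}_1$, so coercivity \cref{18-10-16ct1} plus the embedding $H^1(\Omega)\hookrightarrow L^4(\Omega)$ yields $\big\|N^{h_n}_{j_{\delta_n}}(\beta^\dag)-N^{h_n}_{j_{\delta_n}}(\beta_n)\big\|_{1,\Omega}\preceq\|\beta_n-\beta^\dag\|_\Omega$, and the remaining gap $\big\|N^{h_n}_{j^\dag}(\beta^\dag)-N_{j^\dag}(\beta^\dag)\big\|_{1,\Omega}\to 0$ is disposed of by citing standard finite element convergence for fixed data and fixed coefficient. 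You instead compare the discrete solution with coefficient $\beta_n$ directly against the continuous exact state $\Phi^\dag$, testing with the quasi-interpolant $\Pi^{h_n}z_n$, which forces you to carry the interpolation error $\|\Pi^{h_n}\Phi^\dag-\Phi^\dag\|_{1,\Omega}$ (vanishing by \cref{23/10:ct2*}) and to replace the $L^4$-embedding bound on the reaction gap by a dominated-convergence argument for $\|(\beta_n-\beta^\dag)\Phi^\dag\|_\Omega$. Each route buys something: yours is longer but self-contained --- it reproves the needed discrete-to-continuous convergence rather than invoking it --- and, by avoiding $H^1\hookrightarrow L^4$, it carries no dimension restriction; the paper's is shorter and cleaner precisely because the difference of two discrete solutions is automatically a conforming test function, so no interpolation error needs to be absorbed. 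Your handling of the nonhomogeneous Dirichlet datum (comparing through $M^{h_n}_{g^\dag}(\beta_n)$ so that $z_n$ has vanishing trace on $\Gamma$ and \Cref{moli.data} places $\Pi^{h_n}z_n$ in $\mathcal{V}^{h_n}_{1,0}$) is sound under the paper's convention for imposing discrete Dirichlet data.
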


Note that in case the exact solution $\Phi^\dag \in H^2(\Omega)$ (cf.\ \Cref{27-12-18ct2}) then the convergences (i) and (ii) are obtained if the regularization parameter is chosen such that $\rho_n\to 0$, $\delta_n/\sqrt{\rho_n} \to 0$ and $h^2_n/\sqrt{\rho_n} \to 0$ as $n\to \infty$.
\begin{proof}[Proof of \Cref{stability2}]
We have from the optimality of $\beta_n$ for each $n$ that
\begin{align}\label{6-5-16ct8}
&\big\|N^{h_n}_{j_{\delta_n}}(\beta_n)-M^{h_n}_{g_{\delta_n}}(\beta_n) \big\|^2_{\Omega} + \rho_n \|\beta_n-\beta^*\|^2_{\Omega} \notag\\
&~\quad \le \big\|N^{h_n}_{j_{\delta_n}}(\beta^\dag)-M^{h_n}_{g_{\delta_n}}(\beta^\dag) \big\|^2_{\Omega} + \rho_n \|\beta^\dag - \beta^*\|^2_{\Omega}.
\end{align}
Note that $N_{j^\dag}(\beta^\dag) = M_{g^\dag}(\beta^\dag)$, we thus have
\begin{align*}
&\big\|N^{h_n}_{j_{\delta_n}}(\beta^\dag)-M^{h_n}_{g_{\delta_n}}(\beta^\dag) \big\|_{\Omega}\\ 
&~\quad = \big\|N^{h_n}_{j_{\delta_n}}(\beta^\dag) - N_{j^\dag}(\beta^\dag) + M_{g^\dag}(\beta^\dag) -M^{h_n}_{g_{\delta_n}}(\beta^\dag) \big\|_{\Omega}\notag\\
&~\quad \le \big\|N^{h_n}_{j_{\delta_n}}(\beta^\dag) - N^{h_n}_{j^\dag}(\beta^\dag) \big\|_{\Omega} + \big\|N^{h_n}_{j^\dag}(\beta^\dag) - N_{j^\dag}(\beta^\dag)\big\|_{\Omega} \notag\\
&~\quad\quad + \big\|M^{h_n}_{g_{\delta_n}}(\beta^\dag) - M^{h_n}_{g^\dag}(\beta^\dag) \big\|_{\Omega} + \big\|M^{h_n}_{g^\dag}(\beta^\dag) - M_{g^\dag}(\beta^\dag)\big\|_{\Omega}\notag\\
&~\quad = \varrho^{h_n}_{j^\dag,g^\dag}(\beta^\dag) + \big\|N^{h_n}_{j_{\delta_n}}(\beta^\dag) - N^{h_n}_{j^\dag}(\beta^\dag) \big\|_{\Omega} + \big\|M^{h_n}_{g_{\delta_n}}(\beta^\dag) - M^{h_n}_{g^\dag}(\beta^\dag) \big\|_{\Omega}.
\end{align*}
By the identities \cref{10/4:ct1}--\cref{10/4:ct1*} and the inequality \cref{20-11-18ct2}, we get
\begin{align}\label{20-11-18ct4}
&\big\|N^{h_n}_{j_{\delta_n}}(\beta^\dag) - N^{h_n}_{j^\dag}(\beta^\dag) \big\|_{\Omega} + \big\|M^{h_n}_{g_{\delta_n}}(\beta^\dag) - M^{h_n}_{g^\dag}(\beta^\dag) \big\|_{\Omega} \notag\\
&~\quad \preceq \big\|j_{\delta_n} - j^\dag\big\|_{H^{-1/2}(\Gamma)} + \big\|g_{\delta_n} - g^\dag\big\|_{H^{1/2}(\Gamma)} \notag\\
&~\quad\le \delta_n
\end{align}
which yields
\begin{align}\label{20-11-18ct1}
\big\|N^{h_n}_{j_{\delta_n}}(\beta^\dag)-M^{h_n}_{g_{\delta_n}}(\beta^\dag) \big\|_{\Omega} \preceq \varrho^{h_n}_{j^\dag,g^\dag}(\beta^\dag) + \delta_n.
\end{align}
It follows from \cref{6-5-16ct8}--\cref{20-11-18ct1} that
\begin{align}\label{6-5-16ct10}
\limn \big\|N^{h_n}_{j_{\delta_n}}(\beta_n)-M^{h_n}_{g_{\delta_n}}(\beta_n) \big\|_{\Omega}  =0
\end{align}
and
\begin{align}\label{6-5-16ct11}
\limsupn \|\beta_n-\beta^*\|_{\Omega} \le  \|\beta^\dag-\beta^*\|_{\Omega}.
\end{align}
Next, we get that
\begin{align*}
&\big\|N^{h_n}_{j^\dag}(\beta_n)-M^{h_n}_{g^\dag}(\beta_n) \big\|_{\Omega}  \\
& =  \big\|N^{h_n}_{j^\dag}(\beta_n) - N^{h_n}_{j_{\delta_n}}(\beta_n) + N^{h_n}_{j_{\delta_n}}(\beta_n) - M^{h_n}_{g_{\delta_n}}(\beta_n) + M^{h_n}_{g_{\delta_n}}(\beta_n) - M^{h_n}_{g^\dag}(\beta_n) \big\|_{\Omega} \notag\\
& \le \big\|N^{h_n}_{j^\dag}(\beta_n) - N^{h_n}_{j_{\delta_n}}(\beta_n)\big\|_\Omega + \big\| M^{h_n}_{g_{\delta_n}}(\beta_n) - M^{h_n}_{g^\dag}(\beta_n) \big\|_{\Omega} + \big\|N^{h_n}_{j_{\delta_n}}(\beta_n)-M^{h_n}_{g_{\delta_n}}(\beta_n) \big\|_{\Omega} \notag\\
& \preceq \delta_n + \big\|N^{h_n}_{j_{\delta_n}}(\beta_n)-M^{h_n}_{g_{\delta_n}}(\beta_n) \big\|_{\Omega} \\
&\to 0
\end{align*}
as $n\to\infty$, by \cref{6-5-16ct10}. Furthermore, since $\beta_n\in\mathcal{S}_{ad}$ for all $n\in \mathbb{N}$, in view of the proof of \Cref{stability*}, a subsequence of $\left( \beta_n\right)_n $ not relabeled and an element $\widehat{\beta} \in \mathcal{S}_{ad}$ exist such that 
\begin{align}
& \beta_n \rightharpoonup \widehat{\beta} \mbox{~ weakly}^* \mbox{~ in ~} L^\infty(\Omega), \nonumber\\
& \|\widehat{\beta}-\beta^*\|_{\Omega} \le \liminfn \|\beta_n-\beta^*\|_{\Omega}, \label{6-5-16ct12}\\
& \left(N^{h_n}_{j^\dag}(\beta_n),~ M^{h_n}_{g^\dag}(\beta_n)\right) \rightharpoonup \left( N_{j^\dag}(\widehat{\beta}),~ M_{g^\dag}(\widehat{\beta})\right) ~\mbox{weakly in}~ H^1(\Omega)\notag
\end{align}
which also implies that
\begin{align*}
\big\|N_{j^\dag}(\widehat{\beta}) - M_{g^\dag}(\widehat{\beta})\big\|_\Omega = \limn \big\|N^{h_n}_{j^\dag}(\beta_n)-M^{h_n}_{g^\dag}(\beta_n) \big\|_{\Omega} = 0
\end{align*}
and so that $\widehat{\beta} \in \Pi_{\mathcal{S}_{ad}}(j^\dag,g^\dag)$. Then,
combining \cref{6-5-16ct12} with \cref{6-5-16ct11} and using the uniqueness of the $\beta^*$-minimum-norm solution, we obtain
\begin{align}\label{9-11-18ct1}
\widehat{\beta} = \beta^\dag \mbox{~and~} \limn \|\beta_n - \widehat{\beta}\|_{\Omega} = 0,
\end{align}
the assertion (i) is thus proved. For (ii) we have for all $\phi^{h_n}\in\mathcal{V}^{h_n}_1$ that
\begin{align*}
\big[N^{h_n}_{j_{\delta_n}}(\beta^\dag) - N^{h_n}_{j_{\delta_n}}(\beta_n),\phi^{h_n}\big]_{\boldsymbol{\alpha},\beta^\dag,\sigma} 
=  \big(\beta_n - \beta^\dag, N^{h_n}_{j_{\delta_n}}(\beta_n) \phi^{h_n}\big) 
\end{align*}
which together with the inequality \cref{18-10-16ct1} and the continuous embedding $H^1(\Omega) \hookrightarrow L^4(\Omega)$  imply
\begin{align*}
&\big\|N^{h_n}_{j_{\delta_n}}(\beta^\dag) - N^{h_n}_{j_{\delta_n}}(\beta_n) \big\|^2_{1,\Omega}\\
&~\quad \le \big\|\beta_n - \beta^\dag\big\|_\Omega \big\|N^{h_n}_{j_{\delta_n}}(\beta_n)\big\|_{L^4(\Omega)} \big\|N^{h_n}_{j_{\delta_n}}(\beta^\dag) - N^{h_n}_{j_{\delta_n}}(\beta_n)\big\|_{L^4(\Omega)}\\
&~\quad \preceq \big\|\beta_n - \beta^\dag\big\|_\Omega \big\|N^{h_n}_{j_{\delta_n}}(\beta_n)\big\|_{1,\Omega} \big\|N^{h_n}_{j_{\delta_n}}(\beta^\dag) - N^{h_n}_{j_{\delta_n}}(\beta_n) \big\|_{1,\Omega}\\
&~\quad \preceq \big\|\beta_n - \beta^\dag\big\|_\Omega \big\|N^{h_n}_{j_{\delta_n}}(\beta^\dag) - N^{h_n}_{j_{\delta_n}}(\beta_n) \big\|_{1,\Omega}.
\end{align*} 
Therefore, we arrive at
\begin{align}\label{20-11-18ct5}
\limn \big\|N^{h_n}_{j_{\delta_n}}(\beta^\dag) - N^{h_n}_{j_{\delta_n}}(\beta_n) \big\|_{1,\Omega}
\preceq \limn \big\|\beta_n - \beta^\dag\|_\Omega =0
\end{align}  
with the aid of the limit \cref{9-11-18ct1}. Consequently, we obtain from  \cref{20-11-18ct4} and \cref{20-11-18ct5} that
\begin{align*}
&\big\|N^{h_n}_{j_{\delta_n}}(\beta_n)  - N_{j^\dag}(\beta^\dag) \big\|_{1,\Omega} \\
&~\quad \le \big\|N^{h_n}_{j_{\delta_n}}(\beta_n)  - N^{h_n}_{j_{\delta_n}}(\beta^\dag) \big\|_{1,\Omega} + \big\|N^{h_n}_{j_{\delta_n}}(\beta^\dag) - N^{h_n}_{j^\dag}(\beta^\dag)\big\|_{1,\Omega} \\
&~\quad \quad + \big\|N^{h_n}_{j^\dag}(\beta^\dag) - N_{j^\dag}(\beta^\dag)\big\|_{1,\Omega}\\
&~\quad \to 0
\end{align*}
as $n\to\infty$. By the similar arguments, we also get $\limn \big\|M^{h_n}_{g_{\delta_n}}(\beta_n)  - M_{g^\dag}(\beta^\dag) \big\|_{1,\Omega} =0$, which finishes the proof.
\end{proof}

\section{Gradient projection algorithm and numerical implementation}\label{iterative}

\subsection{Algorithm}\label{PAA}

In this section we present the
gradient projection algorithm with Armijo steplength rule (cf.\ \cite{kelley,rusz}) for numerical solution of the minimization  problem $\big(\mathcal{P}^{h}_{\delta,\rho} \big)$.

In view of the proof of \Cref{existance}, 
we first note that for each $\beta\in \mathcal{S}_{ad}$ the $L^2$-gradient of the cost functional $J^h_{\delta,\rho}$ of the problem $\big( \mathcal{P}^h_{\delta,\rho} \big)$ at $\beta$ is given by 
$$\nabla J^h_{\delta,\rho} (\beta) = M^h_{g_\delta}(\beta)A^h_M(\beta) - N^h_{j_\delta}(\beta)A^h_N(\beta) + \rho(\beta-\beta^*).$$
The algorithm is then read as: given a step size control $\mu \in (0,1)$, an initial approximation $\beta_0$, number of iteration $N$ and setting $k=0$.
\begin{enumerate}
\item Compute $N^h_{j_\delta}(\beta_k)$ and $M^h_{g_\delta}(\beta_k)$  from the equations
\begin{align*}
\big[N^h_{j_\delta}(\beta_k), \phi^h\big]_{(\boldsymbol{\alpha},\beta_k,\sigma)} &= \big\<f,\phi^h\big\>_\Omega + \big\<j_\delta, \phi^h\big\>_{\Gamma} + \big\<j_0, \phi^h\big\>_{\partial\Omega\setminus\Gamma}  \quad \forall \phi^h\in
\mathcal{V}_1^h \\
\big[ M^h_{g_\delta}(\beta_k), \phi^h\big]_{(\boldsymbol{\alpha},\beta_k,\sigma)} &= \big\<f,\phi^h\big\>_\Omega  + \big\<j_0, \phi^h\big\>_{\partial\Omega\setminus\Gamma} \quad \forall \phi^h\in
\mathcal{V}_{1,0}^h, \quad {v^h}_{|\Gamma} = g_\delta 
\end{align*}
and then the solutions $A^h_N(\beta_k)$ and $A^h_M(\beta_k)$ of the adjoint problems
\begin{align*}
\big[ A^h_N(\beta_k), \phi^h\big]_{(\boldsymbol{\alpha},\beta_k,\sigma)} 
&= \big( {N}^h_{j_\delta}(\beta_k) -{M}^h_{g_\delta}(\beta_k), \phi^h\big)_{\Omega}  \quad \mbox{for all} \quad \phi^h\in
\mathcal{V}_{1}^h\\
\big[ A^h_M(\beta_k), \phi^h\big]_{(\boldsymbol{\alpha},\beta_k,\sigma)} 
&= \big( {N}^h_{j_\delta}(\beta_k) -{M}^h_{g_\delta}(\beta_k), \phi^h\big)_{\Omega}  \quad \mbox{for all} \quad \phi^h\in
\mathcal{V}_{1,0}^h.
\end{align*} 
\item Compute the corresponding value of the  cost functional
$$J^h_{\delta,\rho}(\beta_k) := \big\|N^h_{j_\delta}(\beta_k) - M^h_{j_\delta}(\beta_k) \big\|^2_\Omega + \rho\|\beta_k - \beta^*\|^2_\Omega$$
as well as the gradient 
\begin{align*}
\nabla J^h_{\delta,\rho} (\beta_k) = M^h_{g_\delta}(\beta_k)A^h_M(\beta_k)  - N^h_{j_\delta}(\beta_k)A^h_N(\beta_k) + \rho(\beta_k-\beta^*).
\end{align*}
\item Compute 
$$\widehat{\beta}_k := \max\left( \underline{\beta}, \min\big( \overline{\beta}, \beta_k - \mu \nabla J^h_{\delta,\rho} (\beta_k)\big)\right)$$
and then the corresponding states $N^h_{j_\delta}(\widehat{\beta}_k)$ and $M^h_{g_\delta}(\widehat{\beta}_k)$, the value of the cost functional $J^h_{\delta,\rho}(\widehat{\beta}_k)$ as well.

\item Compute the quantity
$$Q := J^h_{\delta,\rho}(\beta_k) - J^h_{\delta,\rho}(\widehat{\beta}_k)+ \tau \mu \|\widehat{\beta}_k - \beta_k\|^2_\Omega$$
for a small positive constant $\tau=10^{-4}$.
\begin{enumerate}
\item If $Q\ge 0$

\qquad  go to the next step (b) below

else

\qquad  set $\mu := \frac{\mu}{2}$ and then go back to the step 3.
\item Update $\beta_k = \widehat{\beta}_k$, set $k=k+1$.
\end{enumerate}
\item  Compute
\begin{align}\label{19-12-18ct1}
\mbox{Tolerance} := \big\| \nabla J^h_{\delta,\rho}(\beta_k) \big\|_\Omega  -\tau_1 -\tau_2\big\| \nabla J^h_{\delta,\rho}(\beta_0) \big\|_\Omega
\end{align}
for $\tau_1 := 10^{-3}h$ and $\tau_2 := 10^{-2}h$. If $\mbox{Tolerance} \le 0$ or $k>N$, then stop; otherwise go back to the step 1.
\end{enumerate}

\subsection{Numerical implementation}

For illustrating the theoretical result we consider the system \cref{17-5-16ct1}--\cref{17-5-16ct1***}
with $\Omega = \{ x = (x_1,x_2) \in \mathbb{R}^2 ~|~ -1 < x_1, x_2 < 1\}$. For discretization
we divide the interval $(-1,1)$ into $\ell$ equal segments, and so the domain $\Omega = (-1,1)^2$ is divided into $2\ell^2$ triangles, where the diameter of each triangle is $h_{\ell} = \frac{\sqrt{8}}{\ell}$.

The source function $f$ is assumed to be discontinuous and defined as
$$f := \frac{3}{2} \chi_{D} - \frac{1}{2}\chi_{\Omega \setminus D},$$ 
where $\chi_D$ is the characteristic function of the Lebesgue measurable set 
$$D := \left\{ (x_1, x_2) \in \Omega ~\big|~ |x_1|\le 1/2 \quad\mbox{and}\quad |x_2| \le 1/2\right\}.$$
We assume that entries of the symmetric diffusion matrix $\boldsymbol{\alpha}$ are discontinuous which are defined as
$$\alpha_{11} := 2\chi_{\Omega_{11}} + \chi_{\Omega\setminus\Omega_{11}},\quad 
\alpha_{12} = \alpha_{21} := \chi_{\Omega_{12}} \quad\mbox{and}\quad
\alpha_{22} := 3\chi_{\Omega_{22}} + 2\chi_{\Omega\setminus\Omega_{22}},$$
with
\begin{align*}
\Omega_{11} &:= \left\{ (x_1, x_2) \in \Omega ~\big|~ |x_1| \le 3/4 \mbox{~and~} |x_2| \le 3/4 \right\},\\
\Omega_{12} &:= \left\{ (x_1, x_2) \in \Omega ~\big|~ |x_1| + |x_2| \le 3/4 \right\} \quad\mbox{and}\\
\Omega_{22} &:= \left\{ (x_1, x_2) \in \Omega ~\big|~ x_1^2 + x_2^2 \le  9/16 \right\}.
\end{align*}
Furthermore, the special function $\sigma$ is chosen to be zero while the constants appearing in the admissible set $\mathcal{S}_{ad}$ defined by (\ref{2-11-18ct1}) are chosen as $\underline{\beta} = 0.05$ and $\overline{\beta}=10$. 

The sought reaction coefficient $\beta^\dag$ is assumed to be discontinuous and given by 
$$\beta^\dag := 3\chi_{\Omega_0} + \chi_{\Omega \setminus \Omega_0}$$
with 
$$\Omega_0:= \left\{ (x_1, x_2) \in \Omega ~\big|~ 4x_1^2 + 9x_2^2 \le  1 \right\}.$$
The Neumann boundary condition on the bottom and left surface is given by
\begin{align}\label{22-12-18ct1}
j^\dag := A\cdot \chi_{(0,1]\times\{-1\}} + B\cdot \chi_{[-1,0]\times\{-1\}} + C\cdot \chi_{\{-1\}\times(-1,0]} + D\cdot \chi_{\{-1\}\times(0,1)}
\end{align}
and on the right and top surface
$$j_0 := 4\chi_{\{1\}\times(-1,0]} - 3\chi_{\{1\}\times(0,1)} + 2\chi_{(0,1]\times\{1\}} - \chi_{[-1,0]\times\{1\}}$$
with the constants $A, B, C$ and $D$ discussed in details later.
The exact state $\Phi^\dag$ is then computed from the finite element equation $K\Phi^\dag=F$, where $K$ and $F$ are the stiffness matrix and the load vector associated with the problem \cref{17-5-16ct1}--\cref{17-5-16ct1**}, respectively. The Dirichlet boundary condition $g^\dag$ in \cref{17-5-16ct1***} is then defined as $g^\dag = \gamma_\Gamma\Phi^\dag$, the Dirichlet trace of $\Phi^\dag$ on the boundary $\Gamma$.

We use the algorithm which is described in \Cref{PAA} for computing the numerical solution of the problem $\big(\mathcal{P}_{\delta,\rho}^{h} \big)$. The step size control is chosen with $\mu=0.75$. The initial approximation and an a priori estimate are the constant functions defined by $\beta_0 = \beta ^* = 1.5$. Our computational process will be started with the coarsest level $\ell=4$. In each iteration $k$ we compute Tolerance
defined by \cref{19-12-18ct1}. Then the iteration is stopped if
$\mbox{Tolerance} \le 0$ or the number of iterations reaches the maximum iteration counted of 600.
After obtaining the numerical solution of the first iteration process with respect to the coarsest level $\ell=4$, we use its interpolation on the next finer mesh $\ell=8$ as the initial approximation and an a priori estimate as well for the algorithm on this finer mesh, and so on for $\ell=16,32,64$.

We mention that in our numerical implementation the sought reaction coefficient is chosen to be discontinuous. To reconstruct such a discontinuous function one usually employs the total variation regularization. We will discuss the details in the last section \S \ref{Conclusions}.

\begin{exam}\label{exam1}
In this example we assume $(A,B,C,D) = (1,-2,3,-4)$ while observations are taken on the bottom surface $\Gamma_{\mbox{observation}} := \Gamma_{\mbox{bottom}} := [-1,1]\times\{-1\}$ only. We assume that noisy observations are available in the form
\begin{align}\label{3-7-17ct1}
\left( j_{\delta_{\ell}}, g_{\delta_{\ell}} \right) = \left( j^\dag+ \theta_\ell\cdot R_{j^\dag}, g^\dag+ \theta_\ell\cdot R_{g^\dag}\right) 
\end{align}
for some $\theta_\ell>0$ depending on  $\ell$, where $R_{j^\dag}$ and $R_{g^\dag}$ are $\partial M^{h_\ell}\times 1$-matrices of random numbers on the interval $(-1,1)$ which are generated by the MATLAB function ``rand'' and $\partial M^{h_\ell}$ is the set of boundary nodes of the triangulation $\mathcal{T}^{h_\ell}$ which belong to $\Gamma_{\mbox{observation}}$. The measurement error is then computed as $\delta_\ell = \big\|j_{\delta_\ell} -j^\dag\big\|_{L^2(\Gamma)} + \big\|g_{\delta_\ell} -g^\dag\big\|_{L^2(\Gamma)}.$ 
To satisfy the condition \cref{31-8-16ct1} in \Cref{stability2} we below take $\theta_\ell = h_\ell \sqrt{10\cdot\rho_\ell}$ and the regularization parameter $\rho = \rho_\ell = 0.001 \sqrt{h_\ell}$.

Let $\beta_\ell$ denote the reaction coefficient obtained at the final iteration of the algorithm corresponding to the refinement level $\ell$. We then use the following abbreviations for the errors
\begin{align*}
&L^2_\beta = \big\|\beta_\ell -  \beta^\dag\big\|_\Omega, \en L^2_N = \big\|N^{h_\ell}_{j_{\delta_\ell}} ({\beta_\ell})  - N^{h_\ell}_{j^\dag}(\beta^\dag)\big\|_\Omega,\\
&L^2_M = \big\|M^{h_\ell}_{j_{\delta_\ell}} ({\beta_\ell})  - M^{h_\ell}_{j^\dag}(\beta^\dag)\big\|_\Omega, \en L^2_D = \Big\|D^{h_\ell}_{g^\dag_{\delta_\ell}} ({\beta_\ell})  - D^{h_\ell}_{\widehat{g}^\dag}(\beta^\dag)\Big\|_\Omega,
\end{align*}
where 
$$g^\dag_{\delta_\ell} =
\begin{cases}
g_{\delta_\ell} & \mbox{on} \quad \Gamma= \Gamma_{\mbox{observation}},\\
\gamma_{\partial\Omega\setminus\Gamma}\Phi^\dag & \mbox{on} \quad \partial\Omega\setminus \Gamma
\end{cases} \quad \mbox{and} \quad \widehat{g}^\dag := \gamma_{\partial\Omega}\Phi^\dag,$$
$D^{h_\ell}_{g^\dag_{\delta_\ell}} ({\beta_\ell})$ and $D^{h_\ell}_{\widehat{g}^\dag}(\beta^\dag)$ are numerical solutions of the problem \eqref{17-5-16ct1} with $\beta = \beta_\ell$ and $\beta = \beta^\dag$, respectively supplemented with the Dirichlet boundary condition $\Phi_{|\partial\Omega} = g^\dag_{\delta_\ell}$ and $\Phi_{|\partial\Omega} = \widehat{g}^\dag$.

The numerical result is summarized in \Cref{b1} and \Cref{b11}, where we present the refinement level $\ell$, the mesh size $h_\ell$ of the triangulation, the regularization parameter $\rho_\ell$, the measurement noise $\delta_\ell$, and the errors $L^2_\beta$, $L^2_N$, $L^2_M$, $L^2_D$.

\begin{table}[H]
\begin{center}
\begin{tabular}{|c|l|l|l|l|l|l|l|}
\hline \multicolumn{8}{|c|}{ { Error history for $\Gamma_{\mbox{observation}} := \Gamma_{\mbox{bottom}}$} 
\vspace{0.05cm}
}\\

\hline
$\ell$ &\scriptsize $h_\ell$ &\scriptsize $\rho_\ell$ &\scriptsize $\delta_\ell$ &\scriptsize {$L^2_\beta$} &\scriptsize {$L^2_N$} &\scriptsize {$L^2_M$} &\scriptsize {$L^2_D$}\\
\hline
4   &0.7071 & 8.4090e-4& 0.1116&  1.2185 & 0.3026& 0.2674 &0.1289\\
\hline
8  &0.3536 & 5.9460e-4& 4.0042e-2&  0.5989& 0.1377& 0.1205 &9.4845e-2\\
\hline
16  &0.1767 & 4.2045e-4& 1.9375e-2&  0.2378& 0.1014& 7.7931e-2 &3.6737e-2\\
\hline
32  &8.8388e-2 & 2.9730e-4& 7.9021e-3&  0.1472& 4.6465e-2& 3.5284e-2 &1.6411e-2\\
\hline
64  &4.4194e-2 & 2.1022e-4& 3.2289e-3& 7.4264e-2& 2.0169e-2& 1.6636e-2 &6.9882e-3\\
\hline
\end{tabular}
\end{center}
\caption{Refinement level $\ell$, mesh size $h_\ell$, regularization parameter $\rho_\ell$,  measurement noise $\delta_\ell$, and errors $L^2_\beta$, $L^2_N$, $L^2_M$, $L^2_D$.}
\label{b1}
\end{table}

The experimental order of convergence (EOC) is presented in Table \ref{b11}, where
$$\mbox{EOC}_\Theta := \dfrac{\ln \Theta(h_1) - \ln \Theta(h_2)}{\ln h_1 - \ln h_2}$$
and $\Theta(h)$ is an error function with respect to the mesh size $h$.

\begin{table}[H]
\begin{center}
\begin{tabular}{|c|c|c|c|c|}
 \hline \multicolumn{5}{|c|}{ { Experimental order of convergence \big($\Gamma_{\mbox{observation}} := \Gamma_{\mbox{bottom}}$\big)} }\\
 \hline
$\ell$ &\scriptsize { EOC$_{L^2_\beta}$} &\scriptsize { EOC$_{L^2_N}$} &\scriptsize { EOC$_{L^2_M}$} &\scriptsize { EOC$_{L^2_D}$}\\
\hline
4 & --& -- & -- &--\\
\hline
8 & 1.0247 & 1.1359 & 1.1500& 0.4426\\
\hline
16 & 1.3326 & 0.4415 & 0.6288 &1.3683 \\
\hline
32 & 0.6920 & 1.1258 & 1.1432 &1.1626\\
\hline
64 & 0.9870 & 1.2040 & 1.0847 &1.2317 \\
\hline
Mean of EOC 
& 1.0091 & 0.9768 & 1.0017 & 1.0513 \\
\hline
\end{tabular}
\caption{Experimental order of convergence between finest and coarsest
level for $L^2_\beta$, $L^2_N$, $L^2_M$ and $L^2_D$.}
\label{b11}
\end{center}
\end{table}

The error history given in \Cref{b1} shows that the algorithm performs well for our identification problem. We also observe a decrease of all errors as the noise level and the mesh size gets smaller as expected from our convergence result in \Cref{stability2}.

All figures presented hereafter correspond to the finest level $\ell = 64$. \Cref{h1} from left to right shows the numerical solution $\beta_\ell$  computed by the algorithm at the final 523$^{\mbox{\tiny th}}$-iteration and the differences $\beta_\ell - I^{h_\ell}_1 \beta^\dag $, $N^{h_\ell}_{j^\dag}(\beta^\dag) - N^{h_\ell}_{j_{\delta_\ell}} ({\beta_\ell})$, $D^{h_\ell}_{{\widehat{g}^\dag}}(\beta^\dag) - D^{h_\ell}_{g^\dag_{\delta_\ell}} ({\beta_\ell})$, where $I^{h_\ell}_1$ is the usual Lagrange node value interpolation operator.

\begin{figure}[H]
\begin{center}
\includegraphics[scale=0.16]{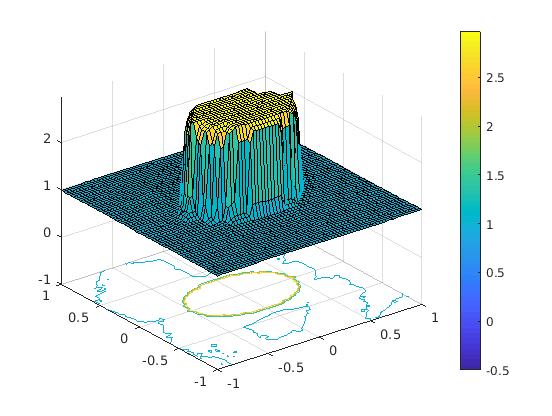}
\includegraphics[scale=0.16]{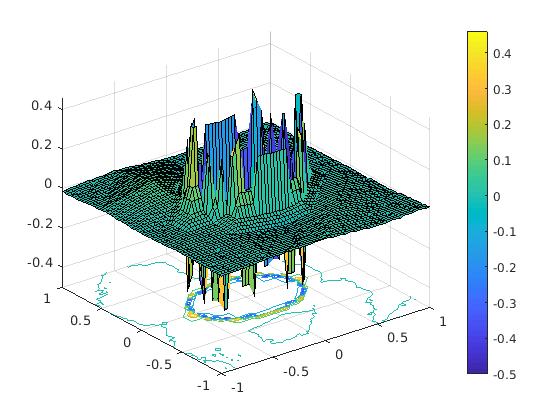} 
\includegraphics[scale=0.16]{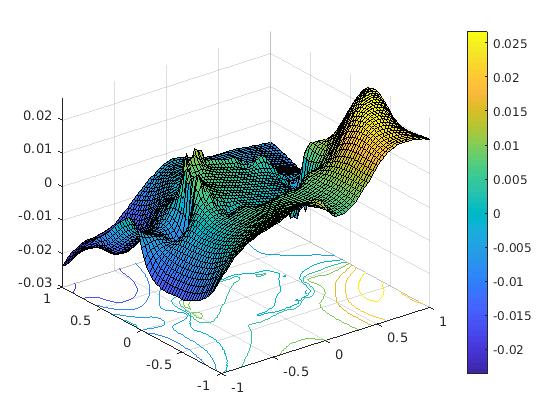}
\includegraphics[scale=0.16]{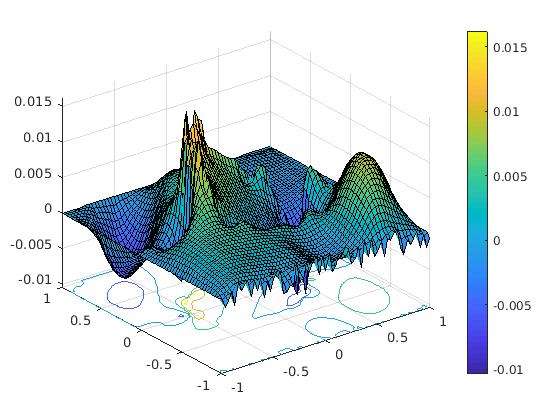}
\end{center}
\caption{Computed numerical solution $\beta_\ell$ of the algorithm at the final 523$^{\mbox{\tiny th}}$-iteration, and the differences $\beta_\ell - I^{h_\ell}_1 \beta^\dag $, $N^{h_\ell}_{j^\dag}(\beta^\dag) - N^{h_\ell}_{j_{\delta_\ell}} ({\beta_\ell})$, $D^{h_\ell}_{{\widehat{g}^\dag}}(\beta^\dag) - D^{h_\ell}_{g^\dag_{\delta_\ell}} ({\beta_\ell})$ for $\ell=64$, $\rho = \rho_\ell = 0.001 \sqrt{h_\ell}$ and $\delta_\ell=3.2289$e-3.}
\label{h1}
\end{figure}
\end{exam}

%=============================
%=============================

\begin{exam}\label{exam2}
In this example we consider regularization parameter in the forms
$$\rho_\ell = h_\ell^2,~ 0.1h_\ell^2,~ 0.01h_\ell^2,~ \mbox{and}~ 0.001h_\ell^2,$$
while $\theta_\ell = 10^{-2} h_\ell \sqrt{\rho_\ell}$. Using the computational process which was described as in \Cref{exam1} starting with $\ell=4$, in \Cref{b2} we perform the numerical result for the finest grid $\ell=64$ and with different values of $\rho_\ell$ above.

\begin{table}[H]
\begin{center}
\begin{tabular}{|c|l|l|l|l|l|}
\hline
\scriptsize { Regularization parameter} &\scriptsize {$\delta_\ell$} &\scriptsize {$L^2_\beta$} &\scriptsize {$L^2_N$} &\scriptsize {$L^2_M$} &\scriptsize {$L^2_D$}\\
\hline
$\rho_\ell= h_\ell^2$e-0 = $1.9531$e-3 & 3.4240e-5 & 5.0702 &  1.5779& 1.1618 & 0.1805\\
\hline
$\rho_\ell= h_\ell^2$e-1 = $1.9531$e-4 & 9.3133e-6 & 2.4302 & 1.1983& 0.8792& 0.1439\\
\hline
$\rho_\ell= h_\ell^2$e-2 = $1.9531$e-5 & 3.3618e-6 & 0.1028 & 3.6218e-2& 2.9314e-2 & 1.5681e-2\\
\hline
$\rho_\ell= h_\ell^2$e-3 = $1.9531$e-6 & 1.0207e-6 & 8.1651e-2 & 3.4587e-2& 2.7745e-2 & 1.3142e-2\\
\hline
\end{tabular}
\end{center}
\caption{Regularization parameter $\rho_\ell$,  measurement noise $\delta_\ell$ and errors $L^2_\beta$, $L^2_N$, $L^2_M$, $L^2_D$.}
\label{b2}
\end{table}

We note that the numerical result is not so good in first two cases, where $\rho_\ell = h_\ell^2$ and $\rho_\ell = 0.1h_\ell^2$. This indicates that previous iteration processes corresponding to the coarse grid levels $\ell=4, 8, ...$ strongly affect  the final obtained numerical result with respect to the finest grid level $\ell=64$. 

A crucial problem of Tikhonov regularization as well as other regularization methods is the choice of the regularization parameter. If the regularization parameter is too large, one obtains only
a poor approximation of the exact solution even for exact data (i.e. the error level of observations is zero), while if it is too small, the reconstruction becomes slow or unstable. The discussion on this subject is still ongoing and for a survey the reader may consult \cite{EHN96,kir96,Mor84}. We mention that by the relation \eqref{31-8-16ct1}, if $\delta_\ell\rho_\ell^{-1/2} \to 0$ the regularized approximation $\beta_\ell$ is convergent in the $L^2(\Omega)$-norm to the sought coefficient $\beta^\dag$ if $\rho_\ell$ is chosen to be $h^2_\ell$. However, the computation given in \Cref{b2} shows in case $\rho_\ell = h^2_\ell$ and $\rho_\ell = 0.1h^2_\ell$ we get only poor approximations of the sought coefficient, though the data noise levels $\delta_\ell$ are quite small.

With $\rho_\ell = h_\ell^2$, \Cref{h2} from left to right shows the computed numerical solution $\beta_\ell$  at the final iteration, the differences $N^{h_\ell}_{j^\dag}(\beta^\dag) - N^{h_\ell}_{j_{\delta_\ell}} ({\beta_\ell})$ and $D^{h_\ell}_{\widehat{g}^\dag}(\beta^\dag) - D^{h_\ell}_{g^\dag_{\delta_\ell}} ({\beta_\ell})$, while the differences $\beta_\ell - I^{h_\ell}_1 \beta^\dag$ and $\beta_\ell(x_1,0) - I^{h_\ell}_1 \beta^\dag(x_1,0)$ as the second variable $x_2=0$ are shown respectively in left two figures of \Cref{h3}. The right one of \Cref{h3} performs the graphs $\beta_\ell(x_1,0)$ (blue one) and $I^{h_\ell}_1 \beta^\dag(x_1,0)$. 

\begin{figure}[H]
\begin{center}
\includegraphics[scale=0.21]{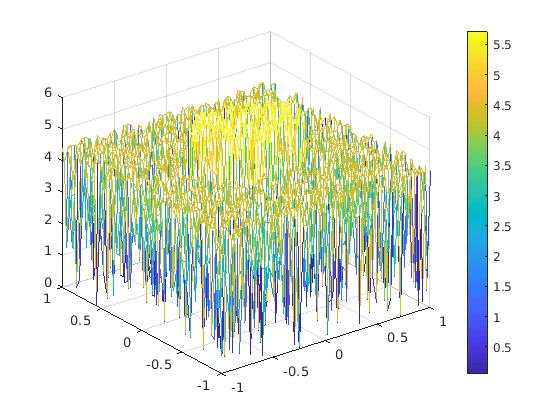}
\includegraphics[scale=0.21]{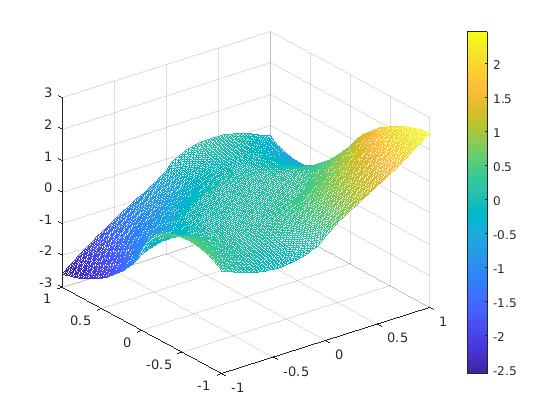}
\includegraphics[scale=0.21]{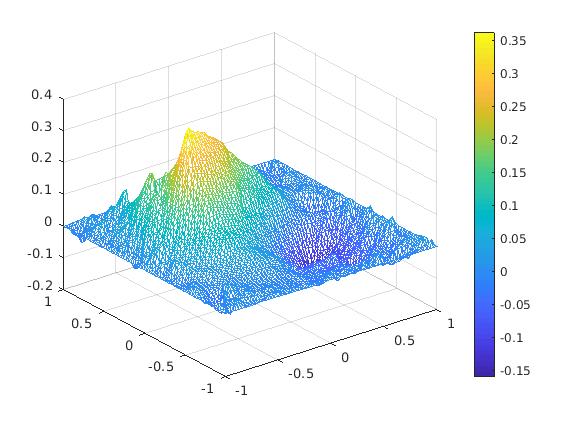}
\end{center}
\caption{With $\rho_\ell = h_\ell^2$: computed numerical solution $\beta_\ell$  at the final iteration, the differences $N^{h_\ell}_{j^\dag}(\beta^\dag) - N^{h_\ell}_{j_{\delta_\ell}} ({\beta_\ell})$ and $D^{h_\ell}_{\widehat{g}^\dag}(\beta^\dag) - D^{h_\ell}_{g^\dag_{\delta_\ell}} ({\beta_\ell})$.}
\label{h2}
\end{figure}

\begin{figure}[H]
\begin{center}
\includegraphics[scale=0.21]{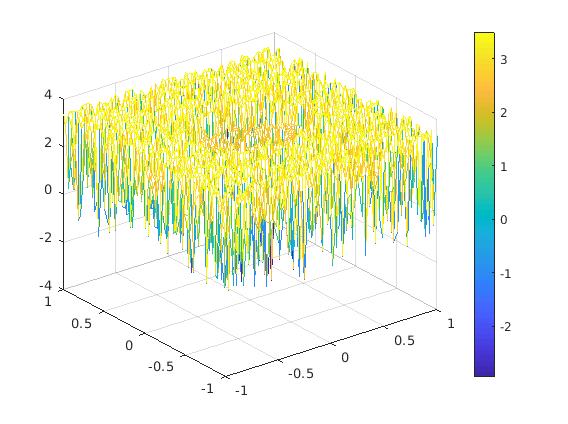}
\includegraphics[scale=0.21]{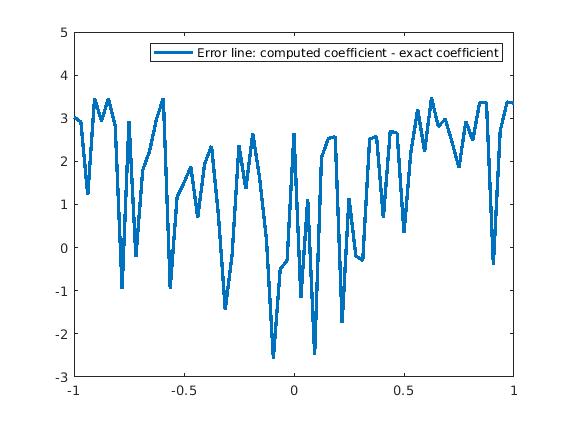}
\includegraphics[scale=0.21]{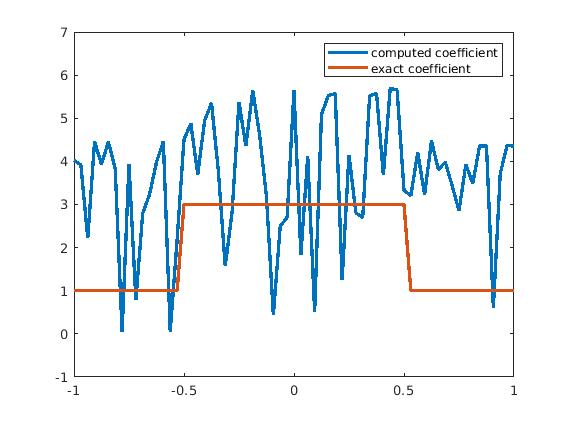}
\end{center}
\caption{With $\rho_\ell = h_\ell^2$: Differences $\beta_\ell - I^{h_\ell}_1 \beta^\dag$,~ $\beta_\ell(x_1,0) - I^{h_\ell}_1 \beta^\dag(x_1,0)$ and the graphs $\beta_\ell(x_1,0)$ (blue one) as well as $I^{h_\ell}_1 \beta^\dag(x_1,0)$.}
\label{h3}
\end{figure}

\Cref{h4} and \Cref{h5} perform analogous numerical computations for $\rho=0.1h_\ell^2$.

\begin{figure}[H]
\begin{center}
\includegraphics[scale=0.21]{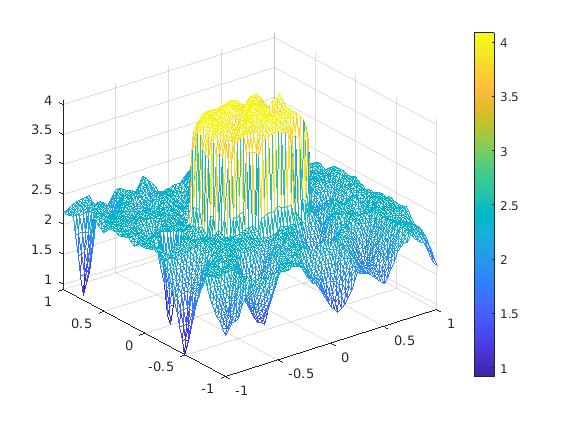}
\includegraphics[scale=0.21]{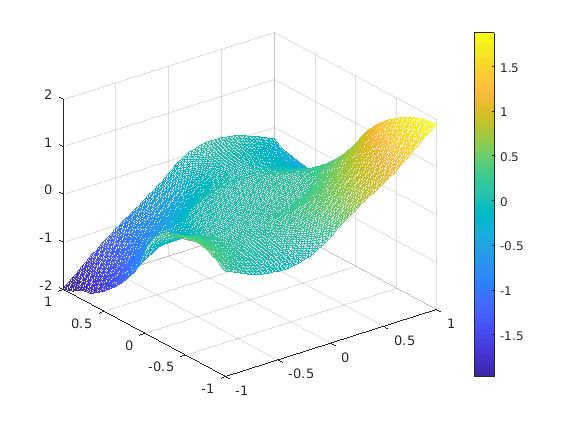}
\includegraphics[scale=0.21]{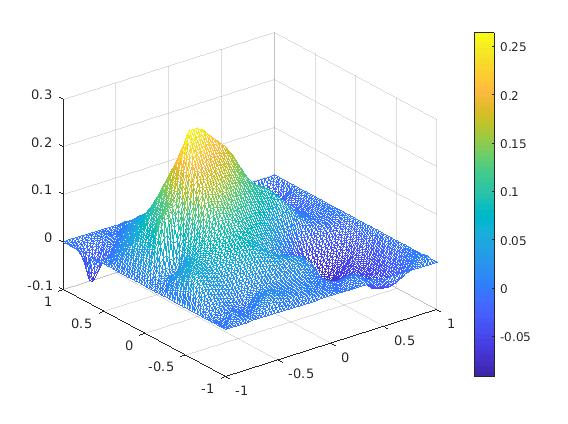}
\end{center}
\caption{With $\rho_\ell = 0.1 h_\ell^2$: computed numerical solution $\beta_\ell$  at the final iteration, the differences $N^{h_\ell}_{j^\dag}(\beta^\dag) - N^{h_\ell}_{j_{\delta_\ell}} ({\beta_\ell})$ and $D^{h_\ell}_{\widehat{g}^\dag}(\beta^\dag) - D^{h_\ell}_{g^\dag_{\delta_\ell}} ({\beta_\ell})$.}
\label{h4}
\end{figure}

\begin{figure}[H]
\begin{center}
\includegraphics[scale=0.21]{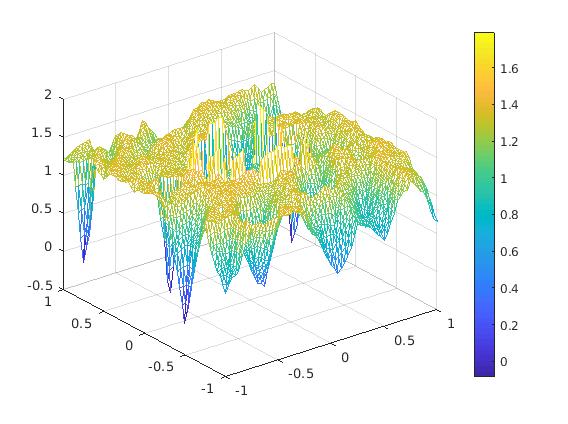}
\includegraphics[scale=0.21]{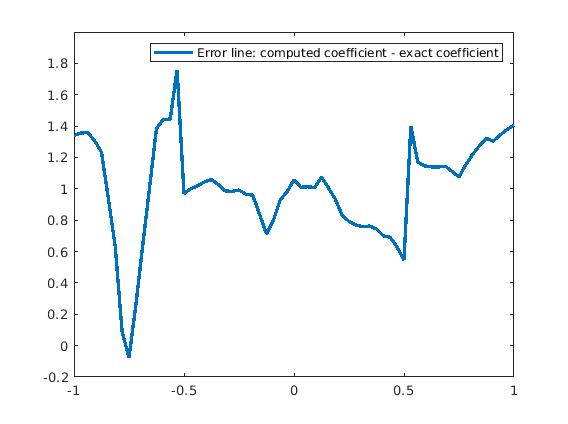}
\includegraphics[scale=0.21]{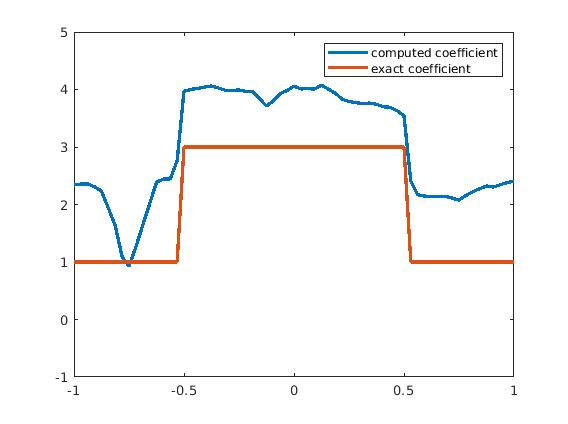}
\end{center}
\caption{With $\rho_\ell = 0.1h_\ell^2$: Differences $\beta_\ell - I^{h_\ell}_1 \beta^\dag$,~ $\beta_\ell(x_1,0) - I^{h_\ell}_1 \beta^\dag(x_1,0)$ and the graphs $\beta_\ell(x_1,0)$ (blue one) as well as $I^{h_\ell}_1 \beta^\dag(x_1,0)$.}
\label{h5}
\end{figure}
\end{exam}

%==============================
%==============================

\begin{exam}\label{exam3}
We now consider the case $\Gamma_{\mbox{observation}} := \Gamma_{\mbox{bottom-left}}$ includes the bottom surface and the left surface of the domain $\Omega$, i.e. $\Gamma_{\mbox{bottom-left}} = \{ x = (x_1,x_2) \in \overline{\Omega} ~|~ x_2=-1\} \cup \{ x = (x_1,x_2) \in \overline{\Omega} ~|~ x_1=-1\}$. In this case $\theta_\ell$ in \cref{3-7-17ct1} is changed to $\theta_\ell = \frac{1}{2} h_\ell \sqrt{10\cdot\rho_\ell}$, while the regularization parameter $\rho = \rho_\ell = 0.001 \sqrt{h_\ell}$ as in \Cref{exam1}. The computational result shows in \Cref{b3} and \Cref{b12} below.

\begin{table}[H]
\begin{center}
\begin{tabular}{|c|l|l|l|l|l|}
\hline \multicolumn{6}{|c|}{ { Error history for $\Gamma_{\mbox{observation}} := \Gamma_{\mbox{bottom-left}}$} 
\vspace{0.05cm}
}\\

\hline
$\ell$ &\scriptsize $\delta_\ell$  &\scriptsize {$L^2_\beta$} &\scriptsize {$L^2_N$} &\scriptsize {$L^2_M$} &\scriptsize {$L^2_D$}\\
\hline
4   & 7.6402e-2& 1.1379 & 0.3066& 0.2618 & 0.1911\\
\hline
8   & 3.2528e-2& 0.5102& 0.2215& 0.1834 & 0.1125\\
\hline
16  & 1.4637e-2& 0.2056& 0.1443& 0.1196 & 5.2079e-2\\
\hline
32  & 5.4159e-3& 0.1192& 5.6266e-2& 4.2652e-2 & 2.0737e-2\\
\hline
64  & 2.3331e-3& 6.6896e-2& 1.4888e-2& 1.0937e-2 &6.1714e-3\\
\hline
\end{tabular}
\end{center}
\caption{Refinement level $\ell$, measurement noise $\delta_\ell$, and errors $L^2_\beta$, $L^2_N$, $L^2_M$, $L^2_D$.}
\label{b3}
\end{table}

\begin{table}[H]
\begin{center}
\begin{tabular}{|c|c|c|c|c|}
 \hline \multicolumn{5}{|c|}{ { Experimental order of convergence \big($\Gamma_{\mbox{observation}} := \Gamma_{\mbox{bottom-left}}$\big)} }\\
 \hline
$\ell$ &\scriptsize { EOC$_{L^2_\beta}$} &\scriptsize { EOC$_{L^2_N}$} &\scriptsize { EOC$_{L^2_M}$} &\scriptsize { EOC$_{L^2_D}$}\\
\hline
4 & --& -- & -- &--\\
\hline
8 & 1.1572 & 0.4691 & 0.5135& 0.7644\\
\hline
16 & 1.3112 & 0.6182 & 0.6168 &1.1112 \\
\hline
32 & 0.7865 & 1.3587 & 1.4875 &1.3285\\
\hline
64 & 0.8334 & 1.9181 & 1.9634 &1.7485\\
\hline
Mean of EOC 
& 1.0221 & 1.0910 & 1.1453 & 1.2381 \\
\hline
\end{tabular}
\caption{Experimental order of convergence between finest and coarsest
level for $L^2_\beta$, $L^2_N$, $L^2_M$ and $L^2_D$.}
\label{b12}
\end{center}
\end{table}

Compared with \Cref{b1}, we do not see the difference clearly in the obtained numerical result between $\Gamma_{\mbox{observation}} := \Gamma_{\mbox{bottom}}$ and $\Gamma_{\mbox{observation}} := \Gamma_{\mbox{bottom-left}}$. \Cref{b11} and \Cref{b12} show that the experimental order of convergence for $L^2$-error of the sought coefficient is of the first order. To the best of our knowledge, there have been no papers providing an error bound for the identification problem mentioned in the present work so far. Nevertheless, we would like to distinguish that utilizing distributed observations for the diffusion coefficient identification problem, under the suitable source conditions authors of \cite{falk83,haqu14,kolo88} also proved the first order convergence rate $\mathcal{O}(h)$ of the piecewise linear finite element approximations to the identified solution in the $L^2$-norm. 

\Cref{h6} below performs the graphs of the computation in this case.

\begin{figure}[H]
\begin{center}
\includegraphics[scale=0.15]{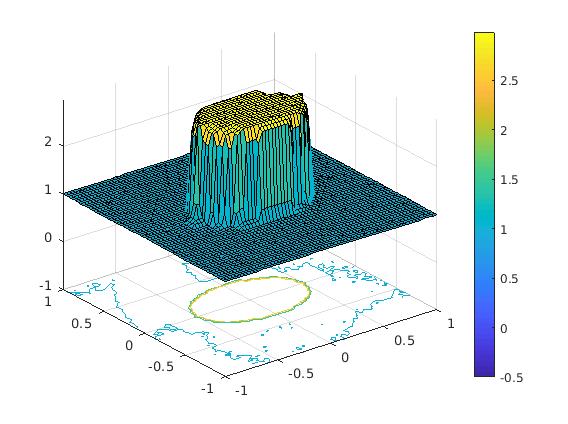}
\includegraphics[scale=0.15]{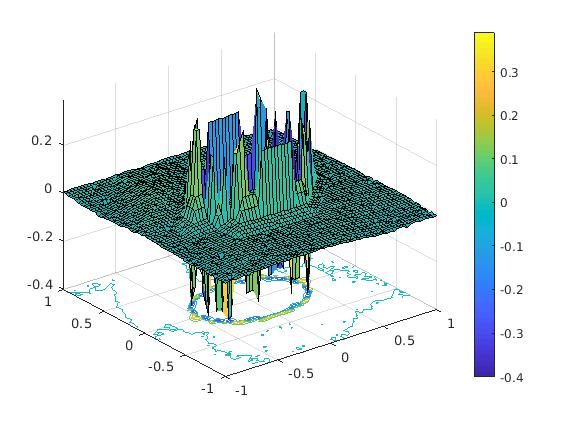} 
\includegraphics[scale=0.15]{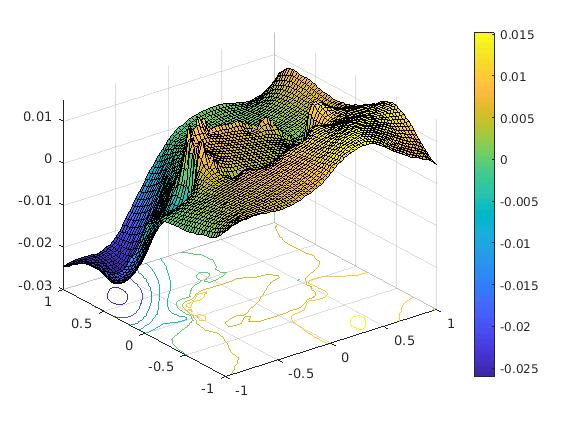}
\includegraphics[scale=0.15]{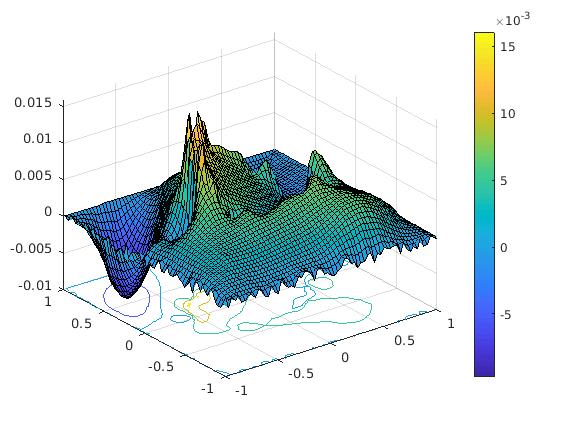}
\end{center}
\caption{Computed numerical solution $\beta_\ell$ of the algorithm at the final 556$^{\mbox{\tiny th}}$-iteration, and the differences $\beta_\ell - I^{h_\ell}_1 \beta^\dag$, $N^{h_\ell}_{j^\dag}(\beta^\dag) - N^{h_\ell}_{j_{\delta_\ell}} ({\beta_\ell})$, $D^{h_\ell}_{\widehat{g}^\dag}(\beta^\dag) - D^{h_\ell}_{g^\dag_{\delta_\ell}} ({\beta_\ell})$ for $\ell=64$, $\rho = \rho_\ell = 0.001 \sqrt{h_\ell}$ and $\delta_\ell=2.3331$e-3.}
\label{h6}
\end{figure}
\end{exam}

%================================
%================================

\begin{exam}\label{exam4}
In the last example we assume that $I$ multiple measurements on the bottom surface and the left surface $\left(j_\delta^i,g_\delta^i \right)_{i=1,\ldots,I}$ are available. Then, the cost functional $J^h_{\delta,\rho}$ and the problem $\big(\mathcal{P}^h_{\rho,\delta}\big)$ can be rewritten as 
$$
\min_{\beta\in\mathcal{S}_{ad}} \bar{J}^h_{\delta,\rho}(\beta), \quad  \bar{J}^h_{\delta,\rho}(\beta) := \frac{1}{I} \sum_{i=1}^I \big\|N^h_{j^i_\delta}(\beta) - M^h_{g^i_\delta}(\beta) \big\|^2_\Omega + \rho\|\beta - \beta^*\|^2_\Omega \eqno \big(\bar{\mathcal{P}}^h_{\delta,\rho}\big)
$$ 
which also attains a solution $\bar{\beta}^h_{\delta,\rho}$. Let $j^\dag_{(A,B,C,D)}$ be given by \cref{22-12-18ct1} which depends on the constants $A, B, C, D$ and $g^\dag_{(A,B,C,D)} :=  \gamma_\Gamma N_{j^\dag_{(A,B,C,D)}} (\beta^\dag)$. The noisy observations are assumed to give by
\begin{align*}
\left( j^{(A,B,C,D)}_{\delta_{\ell}}, g^{(A,B,C,D)}_{\delta_{\ell}} \right) = \left( j^\dag_{(A,B,C,D)}+ \theta\cdot R_{j^\dag_{(A,B,C,D)}}, g^\dag_{(A,B,C,D)}+ \theta\cdot R_{g^\dag_{(A,B,C,D)}}\right),
\end{align*}
where $\theta >0$ is independent of the mesh size, regularization parameter and noise level. $R_{j^\dag_{(A,B,C,D)}}$ and $R_{g^\dag_{(A,B,C,D)}}$ denote $\partial M^{h_\ell}\times 1$-matrices of random numbers on the interval $(-1,1)$, and $\partial M^{h_\ell}$ is the set of boundary nodes of the triangulation $\mathcal{T}^{h_\ell}$ which belong to $\Gamma_{\mbox{observation}} := \Gamma_{\mbox{bottom-left}}$.

The numerical result in \Cref{b4} presents for $\theta=0.2$ and with respect to 

\quad $\bullet$ $I=1$ measurement: $(A,B,C,D) = (1,-2,3,-4)$,

\quad $\bullet$ $I=6$ measurements: fixing $D=-4$ and taking $(A,B,C)$ equals to all permutations of the set $\{1,-2,3\}$,

\quad $\bullet$ $I=16$ measurements: taking $(A,B,C,D)$ equals to all permutations of the set $\{1,-2,3,-4\}$.

We observe that the use of multiple measurements improves the solution to yield an acceptable result even in the presence of relatively large noise as \Cref{b4} below.

\begin{table}[H]
\begin{center}
\begin{tabular}{|c|l|l|l|l|l|}
\hline \multicolumn{6}{|c|}{ {Numerical result for $\ell=64$, $\theta=0.2$} }\\
\hline
\mbox{Number of measurements} $I$  &\scriptsize {\bf Iterate}  &\scriptsize $L^2_\beta$ &\scriptsize $L^2_N$ &\scriptsize $L^2_M$ &\scriptsize $L^2_D$\\
\hline
1  & 600& 0.5889&  0.3040 & 0.2397 &0.1338 \\
\hline
6 & 600&  0.3846& 0.1597& 0.1215 & 6.2670e-2\\
\hline
16 & 600& 0.2746& 8.7720e-2& 7.1346e-2 & 4.5228e-2\\
\hline
\end{tabular}
\end{center}
\caption{Numerical result for $\ell=64$, $\rho = \rho_\ell = 0.001 \sqrt{h_\ell}$, $\theta=0.2$, i.e. $\delta_\ell=0.4773$, and with multiple measurements $I=1,6,16$.}
\label{b4}
\end{table}

Finally, in \Cref{h7}--\Cref{h9} we perform the graphs of the computation for the multiple measurements $I=1,6,16$, respectively, which include the computed numerical solution $\beta_\ell$, and the differences $\beta_\ell - I^{h_\ell}_1 \beta^\dag$, $N^{h_\ell}_{j^\dag}(\beta^\dag) - N^{h_\ell}_{j_{\delta_\ell}} ({\beta_\ell})$, $D^{h_\ell}_{\widehat{g}^\dag}(\beta^\dag) - D^{h_\ell}_{g^\dag_{\delta_\ell}} ({\beta_\ell})$ for $\ell=64$, $\rho = \rho_\ell = 0.001 \sqrt{h_\ell}$ and $\delta_\ell=0.4773$.

\begin{figure}[H]
\begin{center}
\includegraphics[scale=0.15]{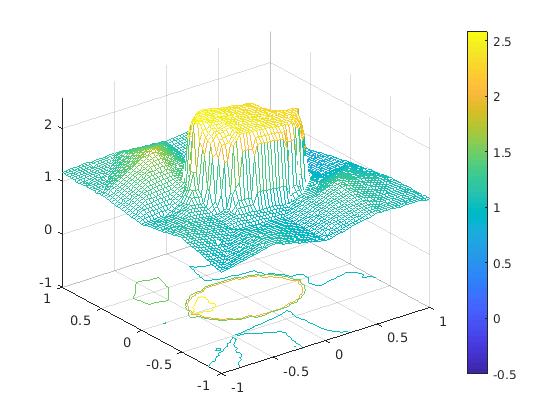}
\includegraphics[scale=0.15]{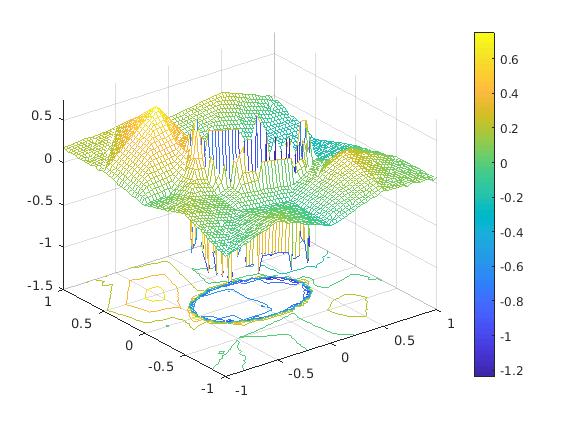} 
\includegraphics[scale=0.15]{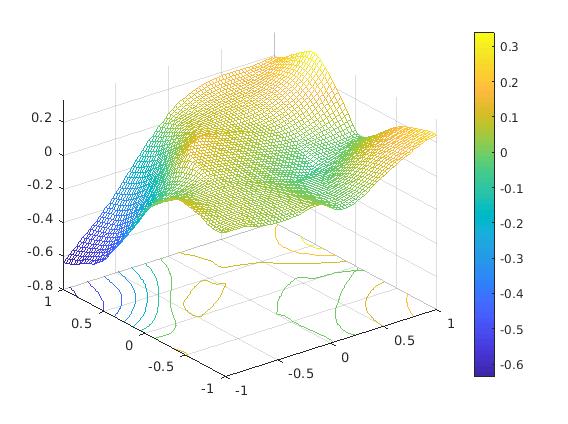}
\includegraphics[scale=0.15]{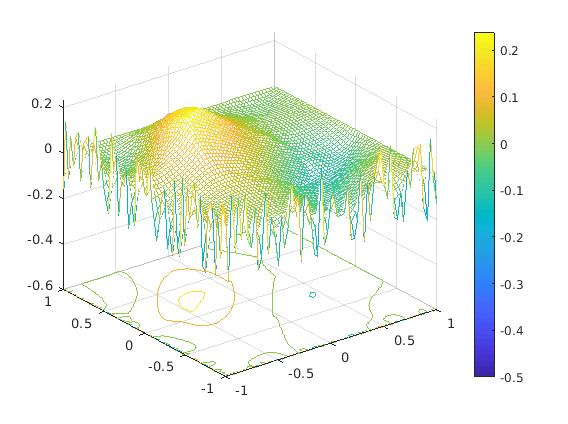}
\end{center}
\caption{$I=1$ measurement.}
\label{h7}
\end{figure}

\begin{figure}[H]
\begin{center}
\includegraphics[scale=0.15]{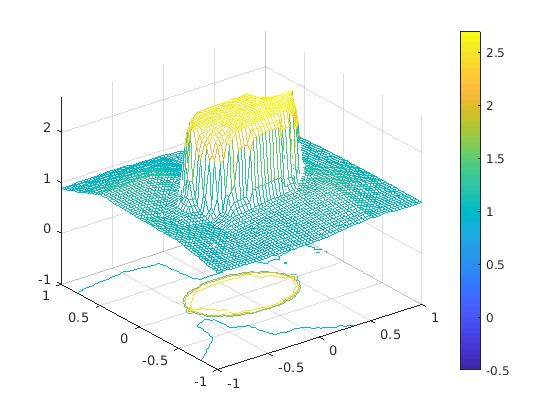}
\includegraphics[scale=0.15]{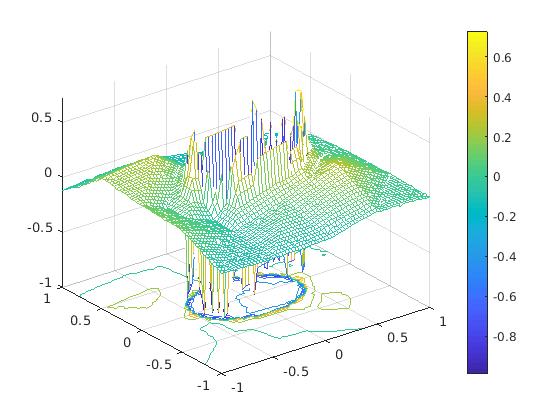} 
\includegraphics[scale=0.15]{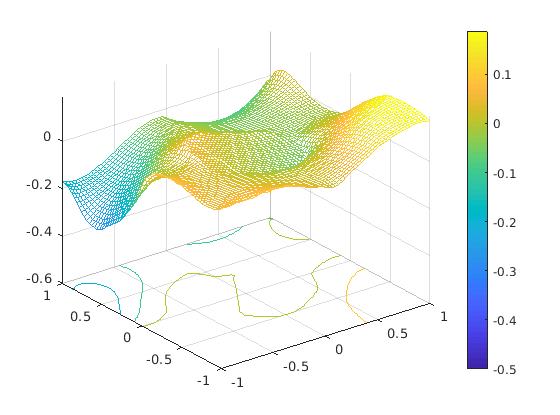}
\includegraphics[scale=0.15]{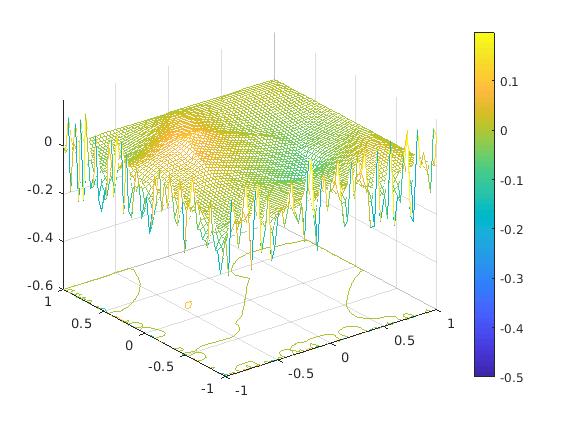}
\end{center}
\caption{$I=6$ measurements.}
\label{h8}
\end{figure}

\begin{figure}[H]
\begin{center}
\includegraphics[scale=0.15]{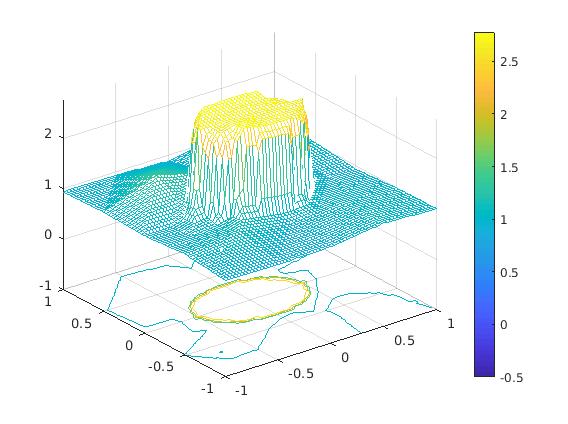}
\includegraphics[scale=0.15]{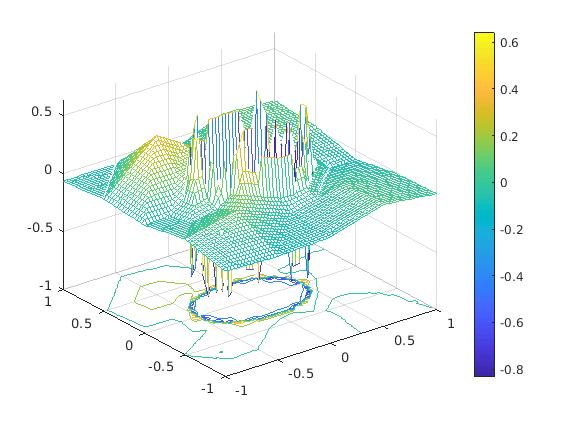} 
\includegraphics[scale=0.15]{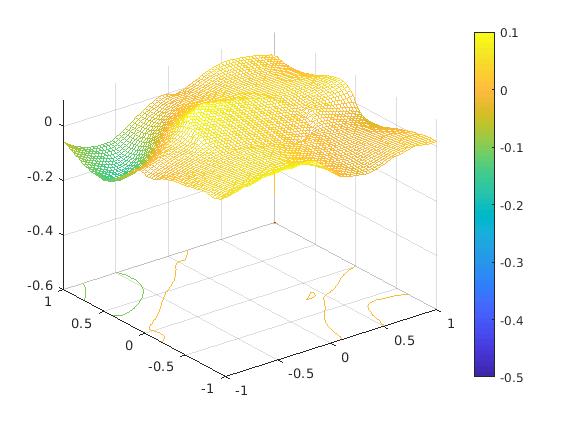}
\includegraphics[scale=0.15]{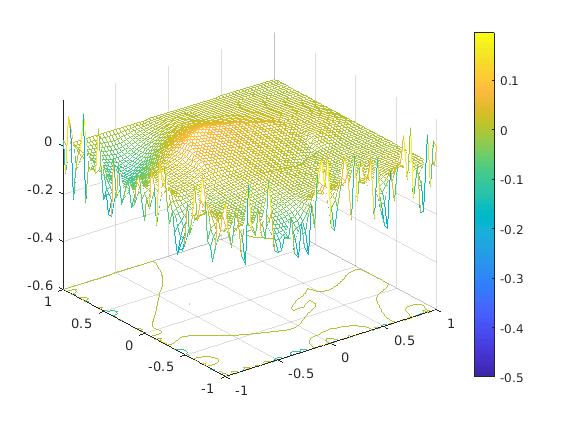}
\end{center}
\caption{$I=16$ measurements.}
\label{h9}
\end{figure}
\end{exam}

\section{Conclusions}\label{Conclusions}
The paper is devoted to the problem of identifying the reaction coefficient $\beta = \beta(x)$ in the equation $-\nabla \cdot \big(\boldsymbol{\alpha}(x) \nabla \Phi(x) \big) + \beta(x) \Phi(x) = f(x), ~ x\in \Omega$ supplemented with the boundary conditions $\boldsymbol{\alpha}(x) \nabla \Phi(x) \cdot \vec{\boldsymbol{n}}(x) +\sigma(x) \Phi(x) = j^\dag(x)$,~ $x\in \Gamma$,~ $\boldsymbol{\alpha}(x) \nabla \Phi(x) \cdot \vec{\boldsymbol{n}}(x) +\sigma(x) \Phi(x) = j_0(x), ~ x\in \partial\Omega\setminus\Gamma$  and $\Phi(x) = g^\dag(x),~ x\in \Gamma$ from the measurement data $\big(j_{\delta},g_{\delta}\big)$ of the exact $\big(j^\dag,g^\dag\big)$. In this context, the special functions $\boldsymbol{\alpha}, f$ and $\sigma$ are given, where $\vec{\boldsymbol{n}}$ is the unit outward normal on $\partial\Omega$.

With the available measurement data $\big(j_{\delta},g_{\delta}\big)$ at hand, we for each $\beta\in\mathcal{S}_{ad}$ consider simultaneously two problems
\begin{align*}
&-\nabla \cdot \big(\boldsymbol{\alpha} \nabla u \big) + \beta u = f  \mbox{~in~}  \Omega,~
\boldsymbol{\alpha} \nabla u \cdot \vec{\boldsymbol{n}} +\sigma u = \begin{cases}
j_\delta  \mbox{~on~} \Gamma, \\
j_0 \mbox{~on~} \partial\Omega\setminus\Gamma
\end{cases}\\
&-\nabla \cdot \big(\boldsymbol{\alpha} \nabla v \big) + \beta v = f \mbox{~in~} \Omega, 
v = g_\delta \mbox{~on~} \Gamma,~
\boldsymbol{\alpha} \nabla v \cdot \vec{\boldsymbol{n}} +\sigma v = j_0 \mbox{~on~} \partial\Omega\setminus\Gamma
\end{align*}
and denote respectively by $N_{j_\delta}(\beta)$ and $M_{g_\delta}(\beta)$ their unique weak solutions. A minimizer $\beta_{\delta,\rho}$ of the Tikhonov regularized minimization problem
$$
\min_{\beta \in \mathcal{S}_{ad}} J_{\delta,\rho}(\beta), \quad J_{\delta,\rho} (\beta) := \big\|N_{j_\delta}(\beta)- M_{g_\delta}(\beta)\big\|^2_{L^2(\Omega)} + \rho R(\beta,\beta^*) \eqno \left(\mathcal{P}_{\delta,\rho}\right) 
$$
is considered as reconstruction, where $\rho>0$ is the regularization parameter and $\beta^*$ is an a priori estimate of the true coefficient, with the regularization term $R(\beta,\beta^*) := \|\beta-\beta^*\|^2_{L^2(\Omega)}$.
Let $N^h_{j_\delta}(\beta)$ and $M^h_{g_\delta}(\beta)$ be corresponding approximations of $N_{j_\delta}(\beta)$ and $M_{g_\delta}(\beta)$ in the finite dimensional space $\mathcal{V}^h_1$ of piecewise linear, continuous finite elements. We then examine the discrete regularized problem
$$
\min_{\beta\in\mathcal{S}_{ad}} J^h_{\delta,\rho}(\beta), \quad  J^h_{\delta,\rho}(\beta) := \big\|N^h_{j_\delta}(\beta) - M^h_{g_\delta}(\beta) \big\|^2_{L^2(\Omega)} + \rho R(\beta,\beta^*) \eqno \big(\mathcal{P}^h_{\delta,\rho}\big)
$$ 
which also attains a minimizer $\beta^h_{\delta,\rho}$.

We show that when $\delta$ and $\rho$ are fixed the sequence of minimizers $\big(\beta^h_{\delta,\rho}\big)_{h>0}$ to $\big(\mathcal{P}^h_{\delta,\rho}\big)$ can be extracted a subsequence which converges in the $L^2(\Omega)$-norm to a solution $\beta_{\delta,\rho}$ of $\big(\mathcal{P}_{\delta,\rho}\big)$ as the mesh size $h\to 0$. Furthermore as $h,\delta \to 0$ and with an appropriate a priori regularization parameter choice $\rho=\rho(h,\delta) \to 0$,
the whole sequence $\big(\beta^h_{\delta,\rho}\big)_{\rho>0}$ converges in the $L^2(\Omega)$-norm to the $\beta^*$-minimum-$R$ solution $\beta^\dag$ of the
identification problem.
The corresponding state sequences $\big( N^{h}_{j_{\delta}}\big(\beta^h_{\delta,\rho}\big)\big)_{\rho>0}$ and $\big( M^{h}_{g_{\delta}}\big(\beta^h_{\delta,\rho}\big)\big)_{\rho>0}$ then converge in the $H^1(\Omega)$-norm to the exact state $\Phi^\dag = \Phi(j^\dag,g^\dag,\beta^\dag)$ of the problem.

For the particular interest in estimating probably discontinuous reaction coefficients  one can employ the total variation regularization $R(\beta) = \int_\Omega |\nabla \beta|$ which was originally introduced in image denoising by authors of \cite{ROF92}
and was also applied to several ill-posed and inverse problems, see, e.g., \cite{AcaVo,aubert, CasKuPo, ChavenKu}. 

Alternatively, starting with \cite{Dau04}, the sparsity regularization has been applied to the parameter identification problem for PDEs, see the review paper \cite{JiMa12}. In this situation the penalty term is defined by $R(\beta) = \sum_i w_i|\langle \beta,\varphi_i\rangle|^p$ for $1\le p\le 2$ and with weights $w_i\ge \underline{w} >0$. Here $\{\varphi_i\}_i$ is an orthonormal basis or overcomplete frame of the space $L^2(\Omega)$. In practice the system $\{\varphi_i\}_i$ is chosen to be highly dependent on the structure of the reconstructed solution, that could be a Fourier representation for oscillatory features, wavelets for pointwise singularities \cite{stu12}.
Together with the total variation penalty term, adopting the sparsity regularization method for the identification problem may be a work for us in future.

\section*{Acknowledgments}
The author would like to thank the Editor and Referees for their valuable comments and suggestions.

\end{document}